\theoremstyle{plain}\swapnumbers
\newtheorem{theorem}{Theorem}
\newtheorem{corollary}[theorem]{Corollary}
\theoremstyle{definition}
\newtheorem{lemma}[theorem]{Lemma}
\theoremstyle{remark}
\newtheorem{remark}[theorem]{Remark}
\numberwithin{equation}{section}
 \DeclareMathOperator{\Aut}{Aut}
 \DeclareMathOperator{\GL}{GL}
\DeclareMathOperator{\id}{id}
\DeclareMathOperator{\Mat}{Mat}
\newcommand{\F}{\mathbb{F}}
\newcommand{\Z}{\mathbb{Z}}
\newcommand{\N}{\mathbb{N}}
\begin{document}

\title[Irreducible components of the varieties of Jordan superalgebras]
{Irreducible components of the varieties of Jordan superalgebras of types $(1,3)$ and $(3,1)$.}

\author[Hern\'andez]{Ma. Isabel Hern\'andez}
\address{SECIHTI-CIMAT Unidad M\'erida}
\email{isabel@cimat.mx }

\author[Martin]{Mar\'ia Eugenia Martin }
\address{Universidade Federal do Parana}
\email{eugenia@ufpr.br}

\author[Rodrigues]{Rodrigo Lucas Rodrigues}
\address{Universidade Federal do Cear\'a}
\email{rodrigo@mat.ufc.br }

\keywords{Jordan superalgebras,
deformation of algebras,
rigidity.}
\subjclass[2020]{
	      17C55, 
         17C70, 
        14J10. 
}
\thanks{The first author was
supported by Grant CONAHCYT A1-S-45886.}
\begin{abstract}
We describe the variety of Jordan superalgebras of dimension $4$ whose even part is a Jordan algebra of dimension $1$ or $3$. We prove that the variety is the union of Zariski closures of the orbits of $11$ and $21$ rigid superalgebras, respectively. In both cases, the irreducible components of the varieties are described. Furthermore, we exhibit a four-dimensional solvable rigid Jordan superalgebra, showing that an analogue to the Vergne conjecture for Jordan superalgebras does not hold.


\end{abstract}
\maketitle
\section{Introduction}


The algebraic  classification of different types of low-dimensional algebras represents
a classical problem of significant interest in algebraic studies. In recent years, there has been a growing 
 significance in their {\it geometric} classification. 

Several noteworthy results have been achieved regarding the degenerations between low-dimensional algebras in a variety defined by polynomial identities. A crucial focus in this line of inquiry is to find the rigid algebras of the variety. These algebras hold significant interest since the closures of their orbits, under the action of the general linear group, constitute irreducible components within the considered variety, with respect to  the Zariski topology.

Numerous works have delved into the challenging task of identifying the irreducible components of low-dimensional algebraic varieties. Namely, the works of P. Gabriel \cite{finiterepresentatiotypeisopen} and G. Mazzola \cite{mazzola} have describe all rigid algebras for the variety of unitary associative algebras of  
 dimensions four and five, respectively. Moreover, the geometric classiﬁcation of Lie algebras, is known for dimension up to six; see, for example, 
\cite{Burde1999} and \cite{kirillov}.

As for the variety of Jordan algebras,  references began in $2005$ with 
\cite{irynashesta}, by I. Kashuba and I. Shestakov, related to three-dimensional Jordan algebras. It was followed by 
\cite{kashubamartin} and \cite{MartinJor3}, by I. Kashuba and M. E. Martin, who studied the geometric classification of four-dimensional 
Jordan algebras over an algebraically closed field and three-dimensional real 
 Jordan algebras,  respectively. The authors showed that any irreducible component in these varieties is determined by a rigid algebra. 

In \cite{ancocheabermudes}, 
 it was established that the subvariety of nilpotent Jordan  algebras is irreducible for dimensions up to three, and  it admits two algebraic components in dimension four. In $2018$, in \cite{MartinJorN5}, was determined that the subvariety of nilpotent Jordan algebras of dimension five is the union of ﬁve irreducible components, with four rigid algebras, and the closure of the union  of a certain inﬁnite family gives rise to the ﬁfth component.

Others non-associative algebras were recently considered. Namely,  in \cite{GeoPowerAssoc} (2020), the variety of four-dimensional commutative power associative algebras was studied by R. L. Rodrigues et al.,
 where it was proved that this variety decomposes into twelve irreducible components that correspond to the Zariski closure of the orbit of rigid algebras, two of  them are non-Jordan algebras.  In \cite{DeformPoissonAlgebras}  (2023), the authors  proved that the variety of four-dimensional nilpotent Poisson algebras has five irreducible components corresponding to three rigid Poisson algebras and two infinite families. 
See also analogous works about pre-Lie algebras in \cite{preLie2009} (2009), Filippov algebras in  \cite{Filippov2011}
 (2011), Leibniz algebras in \cite{Casas2013} (2013), Novikov algebras in \cite{Novikov2014} (2014), among others.

However, the literature on this topic about varieties of superalgebras or graded algebras is scarce. 
The theory adapted to the case of {\it graded} Jordan algebras and Jordan {\it superalgebras}  was first considered in \cite{2003Kashuba_Graded} (2003).

In $2014$, A. Armour and Y. Zhang in \cite{Armour2014} determined that the variety of unitary associative superalgebras of dimension four and of type $(3,1)$ decomposes into six irreducible components, of type $(2,2)$ decomposes into between  eight and ten irreducible components and the variety of these superalgebras of type $(1,3)$  is indecomposable.
 
Degenerations of four-dimensional Lie superalgebras  
of type $(2,2)$ were 
 studied in \cite{DegLieSuperalg}, by  M. A. Alvarez and I. Hernández in $2020$. The authors 
 obtained the Zariski closure of every orbit and proved that this variety is the union of seven irreducible components, three of  them
are the Zariski closures of rigid Lie superalgebras.

The only reference  we know about the geometric classification of Jordan superalgebras is given in dimension three, in  \cite{Geom_SuperJAdim3}, a work of M. A. Alvarez et al.,
 published in $2019$,  where it was
 described the seven irreducible componentes of the variety of Jordan superalgebras of type $(1,2)$ and the same number of irreducible 
 components of the variety of Jordan superalgebras of type $(2,1)$. All irreducible components correspond to the Zariski 
 closure of the orbit of some rigid superalgebra.

The goal of this paper is to explore the geometry of Jordan superalgebras of dimension four belonging to  the
types $(1,3)$ and $(3,1)$, over an algebraically closed ﬁeld of characteristic $0$. Our specific focus involves identifying deformations between two superalgebras, determining the rigid superalgebras, and describing the irreducible components of these varieties.



For the first type, we demonstrate that the variety is a union of Zariski closures of the orbits of $11$ rigid superalgebras. Furthermore, in \cite{VergneConj} (1970), M. Vergne conjectured that there is no complex $n$-dimensional nilpotent Lie algebra which is rigid in the variety of all $n$-dimensional complex Lie algebras. Similar conjectures could  potentially be  applied to other varieties. In particular, no examples of nilpotent or solvable rigid Jordan algebras are known so far. In this paper, we show that the conjecture does not hold in the superalgebra case; namely, we exhibit a four-dimensional solvable rigid Jordan superalgebra of type $(1,3)$. For the type $(3,1)$, we establish that the variety is the union of Zariski closures of the orbits of $21$ rigid superalgebras.

As a direct consequence we achieve the geometric classification of associative and supercommutative superalgebras of dimension four and types $(1,3)$ and $(3,1)$, resulting in five and two irreducible components, respectively. Additionally, we determine that the subvariety of nilpotent Jordan superalgebras of dimension four and type $(1,3)$  has two irreducible components, while for type $(3,1)$ it is indecomposable. 
We emphasize that in a forthcoming paper, we will complete the four-dimensional classification, presenting the orbits of the Jordan superalgebras of type $(2,2)$.

The paper is organized as follows. Section 2 begins with some preliminaries on Jordan superalgebras, we deﬁne the variety of Jordan superalgebras, present useful invariants and discuss the (non-)existence of deformations. In Section 3, we  describe deformations between the Jordan  superalgebras of type $(1,3)$ and characterize the irreducible components of such variety. Finally, in Section 4,  we proceed similarly to Jordan superalgebras of type $(3,1)$.

\section{Preliminaries}

Throughout this paper, $\F$ denotes an algebraically closed field of characteristic $0$. A  {\it Jordan algebra} over $\F$ is an algebra $\mathcal{J}$  satisfying the identities 
$xy=yx$ (commutativity) and $(x^{2}y)x = x^2(yx)$ (the Jordan identity). 
 By linearization techniques, we  
obtain
the equivalent multilinear identity
\
\begin{eqnarray*}
(wx)(yz)+ (wy)(xz)+ (wz)(xy)
- x (w(yz ))-y(w(xz)) - z(w(xy))=0.
 \end{eqnarray*}

A {\it superalgebra} $A$ is  
a $\mathbb{Z}_2$-graded algebra. 
This means that $A$  
can be decomposed into  
the direct sum of subspaces $A=A_0 \oplus A_1$ such that $A_iA_j\subseteq A_{i+j}$, for $i, j \in \mathbb{Z}_2$.  In this way,  
$A_0$  is a subalgebra and $A_1$ is an $A_{0}$-bimodule. The elements of $A_0\setminus\{ 0 \}$ (respectively,  of $ A_1 \setminus \{0\}$) are 
said to be even (respectively, odd) and 
  $x_i$
in $A_i\setminus\{ 0 \}$, $i \in \mathbb{Z}_2$, is called 
homogeneous and of degree $i$. We denote it by   
$|x_i|=i$. The pair $(\dim_\mathbb{F} (A_0), \dim_\mathbb{F} (A_1))$ is  
 called the type of $A$. 
 
A morphism of superalgebras $A$ and $A^\prime$ is a linear map $\Phi\colon A \to A^\prime$ such that $\Phi$ is even (i.e., $\Phi(A _i) \subset A^\prime_i $,  for $ i \in \mathbb{Z}_2$) and $\Phi (xy)=  \Phi(x) \Phi(y)$, for all $x,y \in A$.

Let $\mathcal{G}= {\rm alg}\{1, e_i | 1\leq i, \;  e_ie_j=-e_je_i \}$  be  the Grassmann algebra.  It can be decomposed as
$\mathcal{G}=\mathcal{G}_0 \oplus \mathcal{G}_1$,  and it is an  associative  $\mathbb{Z}_2$-graded algebra, where $\mathcal{G}_0$ (respectively, $\mathcal{G}_1$) are subspaces spanned by the products of even (respectively, odd) length. In addition, $\mathcal{G}$  satisfies 
$uv=(-1)^{|u||v|}vu$,  the supercommutativity, for  all homogeneous elements $u, v \in \mathcal{G}$. 
 For a superalgebra $A = A_{0} \oplus A_{1}$, we define its {\it Grassmann envelope} by
$
\mathcal{G}(A)= (\mathcal{G}_0 \otimes A_0)  \oplus  (\mathcal{G}_1 \otimes A_1),
$  
where the multiplication is 
 given by $(x\otimes u)(y\otimes v):= xy\otimes uv$, for all $x\otimes u \in \mathcal{G}_i \otimes A_i$,  $y\otimes v \in \mathcal{G}_j \otimes A_j$, and $i,j \in \mathbb{Z}_2$.

Let $\mathcal{ V}$ be a variety of algebras over $\F$ defined by a set of (multilinear) identities. A superalgebra $A=A_0\oplus A_1$ is 
 said to be a $\mathcal{ V}$-superalgebra if its  Grassmann envelope $G(A)$ lies in $\mathcal{ V}$.  In particular,  if  $A =A_0\oplus A_1$ is a $\mathcal{ V}$-superalgebra, then the algebra $A_0$ lies in $\mathcal{ V}$ and $A_1$ is  an $A_0$-bimodule in the class $\mathcal{ V}$.  

Let $\{ P_\alpha\}_{\alpha\in I}$ be the set of multilinear identities that defines the variety $\mathcal{ V}$. In general, the corresponding set of superidentities $\{ P^s_ \alpha\}_{ \alpha \in I}$ which defines the variety of $\mathcal{ V}$-superalgebras can be obtained taking into account that if two homogeneous adjacent elements $x,y$ are exchanged, then the corresponding term is multiplied by $(-1)^{|x||y| }$. From this, we can deduce the corresponding superidentities defining the variety of Jordan superalgebras.

A {\it Jordan superalgebra}  is a superalgebra  $\mathcal{J}=\mathcal{J}_0 \oplus \mathcal{J}_1$ satisfying the following superidentities:
\begin{enumerate}[(i)]
\item  Supercommutativity:  $xy=(-1)^{|x||y|} yx;$ 
\item Jordan superidentity: 
\begin{eqnarray*}
(wx)(yz)+(-1)^{|x||y|}(wy)(xz)+(-1)^{(|x|+ |y|)|z|  }(wz)(xy)& \\
-(-1)^{ |w| |x|} x (w(yz ))- (-1)^{| y| (|w| +|x| )}  y(w(xz)) -  (-1)^{ |z|(|w|+|x|+|y|)} z(w(xy))=0,
\end{eqnarray*}
\end{enumerate} 
for  all $x,y,z,w \in (\mathcal{J}_0\cup \mathcal{J}_1)\setminus \{0\}$.


Let $V=V_0 \oplus V_1$ be a $\Z_2$-graded vector space with a fixed  homogeneous basis  
$\left\{e_1, \ldots, e_m, f_1, \ldots, f_n\right\}$. In order to endow $V$ with a Jordan superalgebra structure, we can specify a set of structure constants $(\alpha_{ij}^k, \beta_{ij}^k, \gamma_{ij}^k) \in    \F^{m^3+ 2mn^2}$, where  
$$e_i e_j = \sum_{k=1}^{m} \alpha_{ij}^k e_k, \quad 
e_i f_j = \sum_{k=1}^{n} \beta_{ij}^k f_k, \quad 
f_i f_j = \sum_{k=1}^{m} \gamma_{ij}^k e_k.$$
Since every set of structure constants must satisfy the polynomial identities given by the supercommutativity and Jordan superidentity, it follows that the set of all Jordan superalgebras of type $(m,n)$ defines an affine variety in  $\F^{m^3+ 2mn^2}$, denoted by $\mathcal{JS}^{(m, n)}$. Note that a point $(\alpha_{ij}^k, \beta_{ij}^k, \gamma_{ij}^k)\in \mathcal{JS}^{(m, n)} $ represents, in the fixed basis, a Jordan superalgebra $\mathcal{J}$ over $\F$ of type  $(m,n)$. 

On the other hand, since morphisms of $\Z_2$-graded algebras are even maps,
there is a natural ``change of basis" action of 
the group 
$G= \GL_m(\F) \times \GL_n(\F)$ on $\mathcal{JS}^{(m, n)}$ yields a one-one correspondence between $G$-orbits $\mathcal{J}^G$ on $\mathcal{JS}^{(m, n)}$ and  the isomorphism classes of Jordan superalgebras of type $(m,n)$.   


Let $\mathcal{J}, \mathcal{J}^\prime \in \mathcal{JS}^{(m, n)}$. The superalgebra $\mathcal{J}$ is called a \textit{deformation} of 
$\mathcal{J}^\prime$, and  it is denoted by $\mathcal{J} \to \mathcal{J}^\prime$, if $\mathcal{J}^{\prime G} \subseteq \overline{\mathcal{J}^G} $,   
where  $\overline{\mathcal{J}^G}$ denotes the Zariski closure of the $G$-orbit of $\mathcal{J}$.

The concept of deformation is closely related to that degeneration,  in the sense that $\mathcal{J}$ degenerates to $\mathcal{J}^\prime$ corresponds to $\mathcal{J}^\prime$ deforms into $\mathcal{J}$. In this paper, we adopt the terminology of deformations but in some literature cited in this work, the results are given in terms of degenerations.  
 
 We call $\mathcal{J} \in \mathcal{JS}^{(m, n)}$ 
 {\it (geometrically) rigid} if the orbit $\mathcal{J}^G$ is Zariski-open set in $\mathcal{JS}^{(m, n)}$.  Any deformation $\mathcal{J}^\prime$ of a rigid superalgebra $\mathcal{J}$ satisfies $\mathcal{J}^\prime  \simeq \mathcal{J}$.

In the same way as any affine variety, $\mathcal{JS}^{(m, n)}$ could be decomposed into irreducible components. Notice that the  Zariski-closure of an open orbit is an irreducible component of the variety $\mathcal{JS}^{(m, n)}$,  
 (see \cite{tesejenny}, Proposition 4.36, p.69).

We will describe the deformations of Jordan superalgebras using  
``one-parameter family of deformations'', a concept introduced by  M. Gerstenhaber, in \cite{gerstenhaber}. Namely, let $\mathcal{J}$ be an $(m,n)$-dimensional Jordan superalgebra and consider $g(t)\in \Mat_m(\F(t)) \times \Mat_n(\F(t))$, where $\F(t)$ denote the Laurent polynomials in the variable $t$. Suppose  that for any non-zero $t\in \F$   we have $g(t )\in G$ and let $\mathcal{J}_t=(\alpha_{ij}^k(t), \beta_{ij}^k(t), \gamma_{ij}^k(t))$  be the Jordan superalgebra obtained by applying the change of basis $g(t)$ to $\mathcal{J}$. Then, $\mathcal{J}$ is a deformation of  the Jordan superalgebra $\mathcal{J}^\prime=(\alpha_{ij}^k(0), \beta_{ij}^k(0), \gamma_{ij}^k(0))$. In particular, a superalgebra $\mathcal{J}$ is rigid if any $g (t )$ satisfying the above conditions defines the algebra $\mathcal{J}_t$ isomorphic to $\mathcal{J}$ for every $t \in \F$. Notice that any superalgebra is a deformation of the trivial one taking $g(t)= t^{-1} \id$.

The relation $ \mathcal{J} \to  \mathcal{J}^\prime$  provides a partial order in $\mathcal{JS}^{(m,n)}$, and for a finite number of orbits, the Hasse diagram yields all of the information about the studied variety.

The absence of a deformation for a given pair of superalgebras will result from the failure to meet one of the conditions in Lemma \ref{Lema:nondeformation} below,  where for a Jordan superalgebra $\mathcal{J}$ with structure constants $(\alpha_{ij}^k, \beta_{ij}^k, \gamma_{ij}^k)$,  we denote by $a(\mathcal{J})$ and $F(\mathcal{J})$ the Jordan superalgebra with structure constants $(0, 0, \gamma_{ij}^k)$ and $(\alpha_{ij}^k, \beta_{ij}^k,0)$, respectively. The powers $\mathcal{J}^r$ are defined recursively by $\mathcal{J}^1 = \mathcal{J}$ and   $\mathcal{J}^r = \mathcal{J}^{r-1}\mathcal{J} + \mathcal{J}^{r-2}\mathcal{J}^2 + \cdots + \mathcal{J} \mathcal{J}^{r-1}$, for all $r\in \N.$ In every case $\mathcal{J}^r = (\mathcal{J}^r)_0 \oplus (\mathcal{J}^r)_1$.


The following invariants have already been used in the study of the varieties of Lie superalgebras and  Jordan superalgebras,  see \cite{DegLieSuperalg} and \cite{Geom_SuperJAdim3}. 

\begin{lemma} \label{Lema:nondeformation}
Let $ \mathcal{J},  \mathcal{J^\prime} \in \mathcal{JS}^{(m,n)}$. If 
$ \mathcal{J} \to  \mathcal{J}^\prime$, then the following conditions hold:  
\begin{enumerate}[(i)]
\item $\dim(\Aut( \mathcal{J})) < \dim(\Aut( \mathcal{J}^\prime))$. 
\item $ \dim( \mathcal{J}^r)_i \geq \dim ({ \mathcal{J}^\prime}^{r})_i$, for $i\in \mathbb{Z}_2$.
\item $ \mathcal{J}_{0}  \to \mathcal{J}_0^\prime$.
\item $a( \mathcal{J}) \to a( \mathcal{J}^\prime)$.
\item $F( \mathcal{J}) \to F( \mathcal{J}^\prime)$.
\item If $ \mathcal{J}$ is associative, then $ \mathcal{J}^\prime$ is also associative. Moreover, if $ \mathcal{J}$ satisfies a polynomial identity, then $\mathcal{J}^\prime$ satisfies the same polynomial identity.
\end{enumerate} 
\label{lemma:invariants}
\end{lemma}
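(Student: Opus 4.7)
The plan is to verify each of the six assertions by translating the hypothesis $\mathcal{J}'^G \subseteq \overline{\mathcal{J}^G}$ into either a closedness or an upper-semicontinuity statement in the Zariski topology of the affine variety $\mathcal{JS}^{(m,n)}$. Throughout, I would use two general facts: first, a $G$-orbit in an algebraic $G$-variety is locally closed and irreducible, hence open in its own closure; second, properties defined by polynomial equations in the structure constants $(\alpha_{ij}^k,\beta_{ij}^k,\gamma_{ij}^k)$ cut out closed subvarieties, and consequently descend from $\mathcal{J}$ to every element of $\overline{\mathcal{J}^G}$.

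For (i), I would invoke the orbit-stabilizer identity $\dim \mathcal{J}^G = \dim G - \dim \Aut(\mathcal{J})$, valid because $\Aut(\mathcal{J})$ is exactly the $G$-stabilizer of the tuple of structure constants representing $\mathcal{J}$. In the non-trivial case $\mathcal{J}\not\simeq \mathcal{J}'$, the orbit $\mathcal{J}'^G$ lies in the proper closed subvariety $\overline{\mathcal{J}^G}\setminus \mathcal{J}^G$ of the irreducible variety $\overline{\mathcal{J}^G}$, and hence has strictly smaller dimension, giving the strict inequality of automorphism group dimensions. For (ii), the homogeneous component $(\mathcal{J}^r)_i$ is the image of a linear map whose matrix entries are polynomials in the structure constants; the locus $\{\mathcal{J}:\dim(\mathcal{J}^r)_i\le k\}$ is therefore closed (cut out by vanishing of minors), so the upper-semicontinuity of rank yields the desired inequality on the limit point $\mathcal{J}'$.

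For items (iii), (iv) and (v), the assignments $\mathcal{J}\mapsto \mathcal{J}_0$, $\mathcal{J}\mapsto a(\mathcal{J})$ and $\mathcal{J}\mapsto F(\mathcal{J})$ are projections onto coordinate subspaces of the ambient space of structure constants, and each is equivariant for (a subgroup of) $G$ acting on the source and target. Being continuous and equivariant, such maps send orbit closures into orbit closures, so any deformation $\mathcal{J}\to\mathcal{J}'$ pushes forward to a deformation of each of the three associated objects. Finally, for (vi) the set of associative superalgebras is defined by the vanishing of the associators evaluated on basis triples, giving finitely many polynomial equations in the structure constants; the associative locus is thus Zariski-closed, and by the same reasoning so is the locus of superalgebras satisfying any fixed polynomial identity. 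Hence if $\mathcal{J}$ lies in such a locus, so does the entire closure $\overline{\mathcal{J}^G}$, and therefore $\mathcal{J}'$.

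The main obstacle I anticipate is bookkeeping rather than conceptual: item (ii) requires writing the inductive definition $\mathcal{J}^r=\sum_{s=1}^{r-1}\mathcal{J}^s\mathcal{J}^{r-s}$ explicitly as a polynomial expression in the structure constants in order to justify the minor-rank argument, and in (i) one must be careful to state that the deformation is understood in the strict sense $\mathcal{J}'^G\ne \mathcal{J}^G$, since the strict inequality otherwise fails. All remaining verifications reduce to the standard principle that closed conditions and semicontinuous invariants are preserved under orbit closure.
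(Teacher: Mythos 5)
Your proposal is correct. The paper itself states this lemma without proof, citing \cite{DegLieSuperalg} and \cite{Geom_SuperJAdim3} where these invariants were already used; the arguments you give are precisely the standard ones behind those references: orbit--stabilizer plus the fact that a proper orbit in the closure of an irreducible orbit has strictly smaller dimension for (i), upper semicontinuity of the rank of a matrix with polynomial entries for (ii), continuity and $G$-equivariance of the coordinate projections $\mathcal{J}\mapsto\mathcal{J}_0$, $\mathcal{J}\mapsto a(\mathcal{J})$, $\mathcal{J}\mapsto F(\mathcal{J})$ for (iii)--(v), and the Zariski-closedness and $G$-invariance of the locus defined by a polynomial identity for (vi). You are also right to flag the one point the paper leaves implicit: since its definition of $\mathcal{J}\to\mathcal{J}'$ only requires $\mathcal{J}'^G\subseteq\overline{\mathcal{J}^G}$, the strict inequality in (i) needs the additional hypothesis $\mathcal{J}\not\simeq\mathcal{J}'$, which is how the lemma is actually applied in the non-deformation tables. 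The only remaining detail to make (ii) airtight is the observation that $(\mathcal{J}^r)_i$ is spanned by all $r$-fold products of homogeneous basis elements over all bracketings, so that its dimension is the rank of a single matrix depending polynomially on the structure constants; with that spelled out, your argument is complete.
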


\begin{remark} \label{Remark:como algebras} Let $\mathcal{J}, \mathcal{J}^\prime \in\mathcal{JS}^{(m,n)}$ such that $\mathcal{J}$ and $\mathcal{J}^\prime$ have structure of Jordan algebras. If 
 $\mathcal{J} \not \to \mathcal{J}^\prime$ as algebras, then 
 $\mathcal{J} \not \to \mathcal{J}^\prime$ as superalgebras. In our investigation, we use \cite{tesejenny} to find out when one Jordan algebra does not deform into another.
\end{remark}

\section{Jordan Superalgebras of Type $(1,3)$}


In this section, we  investigate 
the variety $\mathcal{JS}^{(1,3)}$.   I. Hernández et al. in \cite{SuperPowerAssoc4} have provided a concrete list of non-isomorphic commutative power-associative (Jordan, in particular) superalgebras up to dimension $4$ over an algebraically closed field  of characteristic prime to $30$. There exist, up to isomorphism, $20$  Jordan superalgebras of type $(1,3)$ (see \cite{SuperPowerAssoc4}, Table 15, p. 1630). 
Table \ref{table:JSA_1_3} gives representatives for  isomorphism classes and some additional useful information, 
 namely, the dimension of the automorphism group of each superalgebra, and we 
indicate by ``A'' if the superalgebra is associative and ``NA'' otherwise. 
 The 
 superscript “N” in $(1,3)_i^N$ indicates that the superalgebra is nilpotent. 

 \begin{longtable}[H]{rlcc}
 \caption{\label{table:JSA_1_3}Jordan superalgebras of type (1,3)}\\
Label  & Multiplication table & $\dim(\Aut(\mathcal{J})) $ & \\
\hline
\endhead
$(1,3)_1$:&$\;\;e_1^2 = e_1$. & $9$ & A \\
$(1,3)_2$:&$\;\;e_1^2 = e_1$, $\;\;e_1 f_3 = \frac{1}{2}f_3$. &$5$ & NA \\
$(1,3)_3$:&$\;\;e_1^2 = e_1$, $\;\;e_1 f_2 = \frac{1}{2}f_2$, $\;\;e_1 f_3 = \frac{1}{2}f_3$.&$5$ &NA\\
$(1,3)_4$:&$\;\;e_1^2 = e_1$, $\;\;e_1 f_2 = \frac{1}{2}f_2$, $\;\;e_1 f_3 = \frac{1}{2}f_3$, $\;\;f_2 f_3 = e_1$. &$4$&NA\\
$(1,3)_5$:&$\;\;e_1^2 = e_1$, $\;\;e_1 f_1 = \frac{1}{2}f_1$, $\;\;e_1 f_2 = \frac{1}{2}f_2$, $\;\;e_1 f_3 = \frac{1}{2} f_3$. &$9$&NA\\
$(1,3)_6$:&$\;\;e_1^2 = e_1$, $\;\;e_1 f_3 = f_3$. &$5$&A \\
$(1,3)_{7}$:&$\;\;e_1^2 = e_1$, $\;\;e_1 f_2 = \frac{1}{2}f_2$, $\;\;e_1 f_3 = f_3$. &$3$&NA \\
$(1,3)_{8}$:&$\;\;e_1^2 = e_1$, $\;\;e_1 f_1 = \frac{1}{2}f_1$, $\;\;e_1 f_2 = \frac{1}{2}f_2$, $\;\;e_1 f_3 = f_3$. &$5$ &NA\\
$(1,3)_{9}$:&$\;\;e_1^2 = e_1$, $\;\;e_1 f_2 = f_2$, $\;\;e_1 f_3 = f_3$. &$5$ &A\\
$(1,3)_{10}$:&$\;\;e_1^2 = e_1$, $\;\;e_1 f_2 = f_2$, $\;\;e_1 f_3 = f_3$, $\;\;f_2 f_3 = e_1$. &$4$ &NA\\
$(1,3)_{11}$:&$\;\;e_1^2 = e_1$, $\;\;e_1 f_1 = \frac{1}{2} f_1$, $\;\;e_1 f_2 = f_2$, $\;\;e_1 f_3 = f_3$. &$5$ &NA\\
$(1,3)_{12}$:&$\;\;e_1^2 = e_1$, $\;\;e_1 f_1 = f_1$, $\;\;e_1 f_2 = f_2$, $\;\;e_1 f_3 = f_3$. &$9$&A\\
$(1,3)_{13}$:&$\;\;e_1^2 = e_1$, $\;\;e_1 f_1 = f_1$, $\;\;e_1 f_2 = f_2$, $\;\;e_1 f_3 = f_3$, $\;\;f_1 f_2 = e_1$. &$6$&NA\\
$(1,3)^N_{14}$:&$\;\;e_1 f_2 = f_1$. &$6$&A\\
$(1,3)^N_{15}$:&$\;\;e_1 f_2 = f_1$, $\;\;f_2 f_3 = e_1$. &$5$&NA\\
$(1,3)_{16}$:&$\;\;e_1 f_2 = f_1$, $\;\;f_1 f_3 = e_1$. &$3$&NA\\
$(1,3)_{17}$:&$\;\;e_1 f_2 = f_1$, $\;\;f_1 f_2 = e_1$. &$4$&NA\\
$(1,3)^N_{18}$:&$\;\;e_1 f_2 = f_1$, $\;\;e_1 f_3 = f_2$. &$4$&NA\\
$(1,3)^N_{19}$:&$\;\;f_1 f_2 = e_1$. &$7$&A\\
$(1,3)_{20}^N$:&$\;\;e_1^2=0$, $\;\;e_1 f_j = f_i f_j = 0$. &$10$&A\\
\end{longtable}

 In order to determine the  associated Hasse diagram, 
we begin  by using (i) and (vi) in Lemma \ref{lemma:invariants}.
Then, Table \ref{Non_degenerations_1_3}  
 ensures 
the non-existence of deformations $(1,3)_i \not\rightarrow (1,3)_j$. We denote it briefly by $i \not\rightarrow j$. 
Table \ref{bases-(1, 3)} gives all possible essential deformations 
between Jordan superalgebras of type $(1,3)$ and the other deformations can be obtained by transitivity.

\begin{longtable}[H]{| l | c |}
\caption{ \label{Non_degenerations_1_3} Non-deformations between Jordan superalgebras of type $(1, 3)$}\\
\hline 
\multicolumn{1}{|c|}{$\mathcal{J} \not\rightarrow \mathcal{J}^\prime $ } & Reason \\
\hline
\endhead
$ i  \not\rightarrow  j$, for $i \in \{14, \cdots, 20\}$ and 
  $j \in \{1, \cdots, 13\}$. &  $\mathcal{J}_0  \not\rightarrow  \mathcal{J}^\prime_0$\\
\hline
$i \not \rightarrow 13, 19$,  for $i \in \{2,3,8, 11\}$; $\;\; i \not \rightarrow 19$,  for $i \in \{6, 9, 14\}$;  
& $ a(\mathcal{J}) \not \rightarrow  a(\mathcal{J}^\prime)$\\
$7 \not \rightarrow 4, 10, 13, 17, 19$; $\;\;18 \not \rightarrow  15, 19$. 
&\\
\hline
$4 \not \rightarrow 1, 2,  5, 6, 8, 9, 11, 12, 13$; $\;\;10 \not \rightarrow  1, 2, 3, 5, 6,  8, 11,12, 13$;
& $F(\mathcal{J}) \not \rightarrow F(\mathcal{\mathcal{J}^\prime})$\\
$13 \not \rightarrow 1, 5$;  $\;\; 16 \not \rightarrow  18$;  $\;\; 17, 18  \not \rightarrow  9$. &\\
\hline
$i \not \rightarrow 1,5,12$, for $i \in \{2, 3, 8, 11, 19\}$; 
$\;\; i \not \rightarrow 1,12$, for $i \in \{6, 9\}$; 
& $\mathcal{J} \not \rightarrow \mathcal{J}^\prime$\\ 
$7 \not \rightarrow 1, 2, 3, 5, 6, 8, 9, 11,12, 15$. & as algebras\\
\hline
\end{longtable}

\begin{longtable}[H]{|l|llll |}
\caption{\label{bases-(1, 3)}Deformations between Jordan superalgebras of type $(1,3)$}
\\
\hline
$\mathcal{J}  \rightarrow \mathcal{J}^\prime$  & Change of basis &  & & \\
\hline
\endhead
$(1,3)_{2} \rightarrow (1,3)_{14}^N$ & $E_1 =te_1 $, & $F_1 = \frac{t}{2} f_3  $, & $F_2 = f_1+f_3 $, & $F_3 = f_2$. \\
\hline
$(1,3)_{3} \rightarrow (1,3)_{14}^N$ & $E_1 = t e_1$, & $F_1 = t f_1$, & $F_2 = -2 f_1 +  f_2$, & $F_3 =  f_3$. \\
\hline
$(1,3)_{4} \rightarrow (1,3)_{3}$ & $E_1 =e_1 $, & $F_1 =f_1$, & $F_2 =f_2 $, & $F_3 =tf_3  $. \\
\hline
$(1,3)_{4} \rightarrow (1,3)_{15}^N$ & $E_1 = t e_1$, & $F_1 = t f_1$, & $F_2 = -2 f_1 +  f_2$, & $F_3 =  t f_3$. \\
\hline
$(1,3)_{6} \rightarrow (1,3)_{14}^N$ & $E_1 = te_1$, & $F_1 =tf_1  $, & $F_2 =- f_1+f_3 $, & $F_3 =f_2$. \\
\hline
$(1,3)_{7} \rightarrow (1,3)_{18}^N$ & $E_1 = t e_1$, &   
$F_1 = \frac{1}{4} t^2 f_2 + t^2 f_3$,   
&$F_2  =  \frac{1}{2}  t f_2 + t f_3$, 
& $F_3 = f_1+ f_2 + f_3$. 
 \\
\hline
$(1,3)_{8} \rightarrow (1,3)_{14}^N$ & $E_1 = t e_1$, & $F_1 =  \frac{1}{2} t f_3$, & $F_2 = f_2 +  f_3$, & $F_3 = f_1$. \\
\hline
$(1,3)_{9} \rightarrow (1,3)_{14}^N$ & $E_1  = t e_1$, & $F_1  = t  f_2$, & $F_2  =  f_1 + f_2$, & $F_3  = f_3$. \\
\hline
$(1,3)_{10} \rightarrow (1,3)_{9}$ & $E_1 =e_1 $, & $F_1 =f_1$, & $F_2 =f_2 $, & $F_3 =tf_3 $. \\
\hline
$(1,3)_{10} \rightarrow (1,3)_{15}^N$ & $E_1 = t e_1$, & $F_1 = t^2  f_1$, & $F_2  =  - tf_1+t f_2$, & $F_3  = f_3$. \\
\hline
$(1,3)_{11} \rightarrow (1,3)_{14}^N$ & $E_1 = t e_1$, &  $F_1 = - \frac{1}{2} t f_1$, & $F_2 =f_1+ f_2+f_3$, & $F_3 = f_3$. \\
\hline
$(1,3)_{13} \rightarrow (1,3)_{12}$ & $E_1 = e_1$, & $F_1 = t f_1$, & $F_2 = f_2$, & $F_3 =  f_3$. \\
\hline
$(1,3)_{13} \rightarrow (1,3)_{19}^N$ & $E_1  = t e_1$, & $F_1  = t f_1$, & $F_2  = f_2$, & $F_3  = f_3$.  \\
\hline
$(1,3)_{15}^N \rightarrow (1,3)_{14}^N$ & $E_1  = e_1$, & $F_1  = t f_1$, & $F_2  = t f_2$, & $F_3  = f_3$. \\
\hline
$(1,3)_{15}^N \rightarrow (1,3)_{19}^N$ & $E_1  = t e_1$, &  $F_1  = -t f_3$, & $F_2  = f_2$, & $F_3  = f_1$. \\
\hline
$(1,3)_{16} \rightarrow (1,3)_{17}$ & $E_1= e_1$, & $F_1= f_1$, & $F_2 = f_1 + f_2 + f_3$, &  $F_3= t f_2$. \\
\hline
$(1,3)_{17} \rightarrow (1,3)_{15}^N$ & $E_1 = t^2 e_1$, & $F_1 = t^2 f_3$, & $F_2 = f_1 + t f_2$, &  $F_3 = t^2 f_2 + f_3$. \\ 
\hline
$(1,3)_{18}^N \rightarrow (1,3)_{14}^N$ & $E_1  = t e_1$, & $F_1  = t f_1$, & $F_2  =  f_2$, & $F_3 =  f_3$.\\
\hline
\end{longtable}

According to the above information, we  deduce 
the principal result of this section.   

\begin{theorem}\label{theorem:irreducible_components_1_3}There exist exactly $11$ rigid Jordan superalgebras of type $(1,3)$ whose Zariski closures of their orbits are the irreducible components of the variety $\mathcal{JS}^{(1,3)}$. The rigid superalgebras are:
$$
 (1,3)_i  \quad  \text{for} \quad 
i \in 
\{1,2,4,5,6,7,8,10, 11, 13, 16
\}
$$
and the irreducible components are 
\begin{align*}
    \overline{ (1,3)_{1}^G }  & =  \{  (1,3)_{1},  (1,3)_{20}^N  \} \\
\overline{ (1,3)_{2}^G }  & =  \{  (1,3)_{2},  (1,3)_{14}^N,  (1,3)_{20}^N  \}    \\
\overline{ (1,3)_{4}^G }  & =  \{(1,3)_{3},  (1,3)_{4},     (1,3)_{14}^N, (1,3)_{15}^N,   (1,3)_{19}^N,  (1,3)_{20}^N \}    \\
\overline{ (1,3)_{5}^G }  & =  \{  (1,3)_{5},  (1,3)_{20}^N   \}    \\
\overline{ (1,3)_{6}^G }  & =  \{  (1,3)_{6},  (1,3)_{14}^N,  (1,3)_{20}^N   \}    \\
\overline{ (1,3)_{7}^G }  & =  \{  (1,3)_{7},  (1,3)_{14}^N,  (1,3)_{18}^N,   (1,3)_{20}^N \}    \\
\overline{ (1,3)_{8}^G }  & =  \{  (1,3)_{8},  (1,3)_{14}^N,  (1,3)_{20}^N  \}    \\
\overline{ (1,3)_{10}^G }  & =  \{  (1,3)_{9}, (1,3)_{10},   (1,3)_{14}^N,   (1,3)_{15}^N,  (1,3)_{19}^N,  (1,3)_{20}^N \}    \\
\overline{ (1,3)_{11}^G }  & =  \{  (1,3)_{11},  (1,3)_{14}^N,  (1,3)_{20}^N   \}    \\
\overline{ (1,3)_{13}^G}  & =  \{(1,3)_{12},    (1,3)_{13},  (1,3)_{19}^N,    (1,3)_{20}^N  \}    \\
\overline{ (1,3)_{16}^G }  & =  \{(1,3)_{14}^N,  (1,3)_{15}^N,   (1,3)_{16},  (1,3)_{17},   (1,3)_{19}^N,  (1,3)_{20}^N \}
\end{align*}
\end{theorem}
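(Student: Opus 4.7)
The plan is to (i) assemble the deformation poset on $\mathcal{JS}^{(1,3)}$ from Tables \ref{Non_degenerations_1_3} and \ref{bases-(1, 3)}, (ii) identify the rigid superalgebras as its maximal elements, and (iii) describe each irreducible component as the downward closure of a rigid representative.

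First, I would regard the $20$ isomorphism classes of Table \ref{table:JSA_1_3} as the vertices of the Hasse diagram. For each entry of Table \ref{bases-(1, 3)}, the edge $\mathcal{J}\to\mathcal{J}'$ is verified by substituting the proposed one-parameter change of basis into the multiplication of $\mathcal{J}$ and checking that the resulting structure constants specialize at $t=0$ to those of $\mathcal{J}'$. For each entry of Table \ref{Non_degenerations_1_3}, the non-existence of the corresponding deformation is certified by one of the invariants of Lemma \ref{Lema:nondeformation}: the dimension of the automorphism group, the even/odd parts of $(\mathcal{J}^r)_i$, or the induced deformation of $\mathcal{J}_0$, $a(\mathcal{J})$ and $F(\mathcal{J})$; in cases where both superalgebras are genuine Jordan algebras, Remark \ref{Remark:como algebras} inherits non-deformations from the known classification of four-dimensional Jordan algebras in \cite{tesejenny}. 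Taking the transitive closure of the verified edges (and incorporating the universal deformation into $(1,3)^N_{20}$ via $g(t)=t^{-1}\id$) yields the complete poset.

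Next, I would read off the maximal elements of the poset, which turn out to be precisely the $11$ superalgebras claimed. Because the variety has only finitely many orbits, a maximal orbit is Zariski-open, hence its representative is rigid and its closure is an irreducible component of $\mathcal{JS}^{(1,3)}$. Conversely, any irreducible component is $G$-invariant and contains a unique dense (hence rigid) orbit; thus the eleven closures $\overline{(1,3)_i^G}$ exhaust the components. The explicit list for each $\overline{(1,3)_i^G}$ is then obtained simply by enumerating the $\mathcal{J}'$ with $(1,3)_i\to\mathcal{J}'$ in the completed poset.

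The main obstacle is the exhaustiveness of the case analysis: for every ordered pair among the $20$ orbits one must certify either a deformation or its absence. The delicate cases are those where the coarse invariants collapse, so that one must either exhibit a subtle change of basis, as in the entries $(1,3)_{7}\to(1,3)^N_{18}$ or $(1,3)_{17}\to(1,3)^N_{15}$, where higher powers of $t$ appear, or else combine several invariants from Lemma \ref{lemma:invariants} to separate the orbits. A parallel verification is that the $11$ rigid orbits are pairwise incomparable; this is settled by the dimensions of the automorphism groups in Table \ref{table:JSA_1_3} together with the behaviour of $a(\mathcal{J})$ and $F(\mathcal{J})$, which are exactly the data tabulated in the first three lines of Table \ref{Non_degenerations_1_3}.
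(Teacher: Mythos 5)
Your proposal is correct and follows essentially the same route as the paper: the authors likewise build the Hasse diagram from Table \ref{Non_degenerations_1_3} (non-deformations certified by the invariants of Lemma \ref{Lema:nondeformation} and Remark \ref{Remark:como algebras}) and Table \ref{bases-(1, 3)} (explicit one-parameter changes of basis), then read off the rigid superalgebras as the maximal orbits and the components as their Zariski closures. The only difference is cosmetic: where you justify directly that, with finitely many orbits, maximal orbits are open and every component is the closure of a rigid orbit, the paper delegates these facts to Propositions 4.36 and 4.39 of \cite{tesejenny}.
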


Moreover, in \cite{VergneConj}, M. Vergne conjectured that there is no complex $n$-dimensional nilpotent Lie algebra which is rigid in the variety of all $n$-dimensional complex Lie algebras. Similar conjectures could  potentially be  applied to other varieties. In particular, no examples of nilpotent or solvable rigid Jordan algebras are known so far. As a consequence of Theorem \ref{theorem:irreducible_components_1_3} we prove that the conjecture does not hold in the superalgebras case. Indeed, the superalgebra $(1,3)_{16}$ is a four-dimensional solvable rigid Jordan superalgebra.

The irreducible components of $\mathcal{JS}^{(1,3)}$ are represented  in Figure \ref{grafica_1_3}. In Hasse diagrams we adopt the following notation: the blue color indicates an associative superalgebra, and a square represents a nilpotent superalgebra. Furthermore, for abbreviation, we let $i$ stands for either $(1,3)_i$ or $(1,3)_i^N$.

\begin{figure}[H]
  \centering
    \includegraphics[width=8cm , height=8cm]{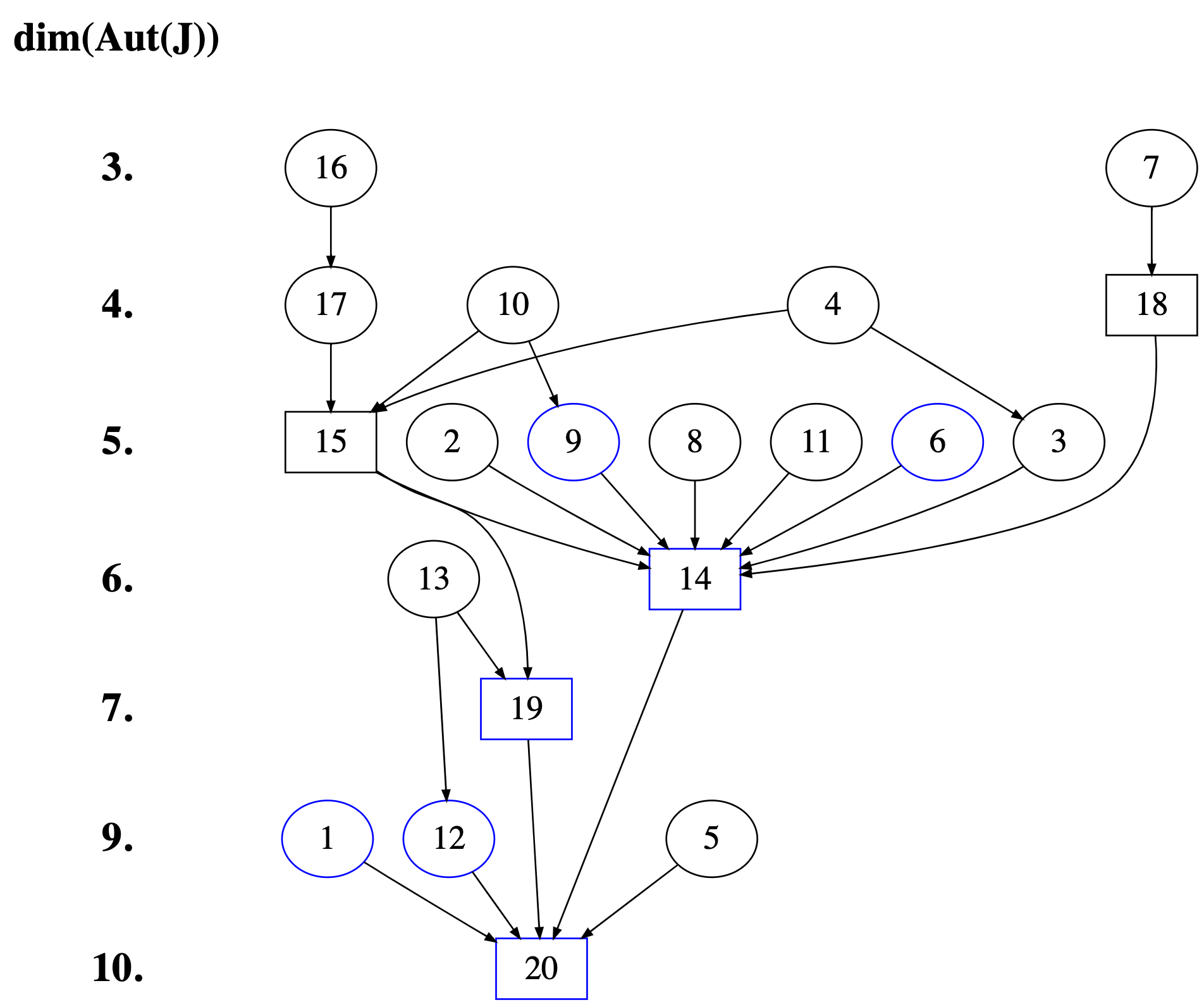}
  \caption{
  \label{grafica_1_3}
  Hasse diagram of deformations for Jordan  superalgebras of type $(1,3)$
  }
\end{figure}

 Notice that the conditions of associativity and nilpotency are determined by polynomial equations on the structure constants, so the 
 $\mathcal{ASC}^{(m,n)} \subset \mathcal{JS}^{(m,n)}$ and $\mathcal{NJS}^{(m,n)}   \subset \mathcal{JS}^{(m,n)}$ are affine subvarieties of associative supercommutative  and nilpotent Jordan superalgebras of type 
$(m,n)$, respectively. As a direct consequence, we achieve the geometric classification of these varieties.

\begin{corollary}
The subvariety $\mathcal{ASC}^{(1,3)} \subset \mathcal{JS}^{(1,3)}$ 
has $5$ irreducible components given by 
\begin{align*}
    \overline{ (1,3)_{4}^G } & =\{  (1,3)_{4},  (1,3)_{20}^N  \}\\
\overline{ (1,3)_{6}^G }  & =\{  (1,3)_{6},  (1,3)_{14}^N,  (1,3)_{20}^N  \}   \\
\overline{ (1,3)_{9}^G }  & =\{  (1,3)_{9},  (1,3)_{14}^N,  (1,3)_{20}^N  \}   \\
\overline{ (1,3)_{12}^G }  & =\{  (1,3)_{12},   (1,3)_{20}^N  \}    \\
\overline{ ((1,3)_{19}^N)^G }  & =\{  (1,3)_{19}^N,   (1,3)_{20}^N  \}  
\end{align*}
\end{corollary}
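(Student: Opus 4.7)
The strategy is to invoke Theorem \ref{theorem:irreducible_components_1_3} together with Lemma \ref{Lema:nondeformation}(vi), which says that associativity is preserved under deformation. Since associativity is a polynomial condition on the structure constants (supercommutativity being already built into the definition of a Jordan superalgebra), the subvariety $\mathcal{ASC}^{(1,3)}\subset\mathcal{JS}^{(1,3)}$ is Zariski-closed; hence for any associative $\mathcal{J}$, the closure $\overline{\mathcal{J}^G}$ computed in $\mathcal{JS}^{(1,3)}$ already lies in $\mathcal{ASC}^{(1,3)}$ and coincides with its closure taken within the subvariety. The problem thus reduces to restricting the Hasse diagram of $\mathcal{JS}^{(1,3)}$ to the associative part and picking out the maximal elements.

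First I would read off from Table \ref{table:JSA_1_3} the seven associative orbits (those labelled ``A''): $(1,3)_1,\;(1,3)_6,\;(1,3)_9,\;(1,3)_{12},\;(1,3)_{14}^N,\;(1,3)_{19}^N,\;(1,3)_{20}^N$. Inspecting the closures in Theorem \ref{theorem:irreducible_components_1_3}, one sees that $(1,3)_{20}^N$ lies in every orbit closure and that $(1,3)_{14}^N$ lies in both $\overline{(1,3)_6^G}$ and $\overline{(1,3)_{10}^G}$ (and in particular appears in the closure of an associative orbit), so neither of these two is rigid in $\mathcal{ASC}^{(1,3)}$. Each of the five remaining orbits sits in Theorem \ref{theorem:irreducible_components_1_3} only below non-associative orbits---for instance $(1,3)_9$ below $(1,3)_{10}$, $(1,3)_{12}$ below $(1,3)_{13}$, and $(1,3)_{19}^N$ below $(1,3)_{13}$ and $(1,3)_{15}^N$---so by Lemma \ref{Lema:nondeformation}(vi) none of them can be a non-trivial deformation of any associative superalgebra.

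To conclude that each of the five candidates is rigid in $\mathcal{ASC}^{(1,3)}$, I would rule out pairwise deformations among them using the invariants of Lemma \ref{Lema:nondeformation}, chiefly the automorphism dimension: $\dim\Aut((1,3)_{12})=9>6=\dim\Aut((1,3)_{14}^N)$ rules out $(1,3)_{12}\to(1,3)_{14}^N$, $\dim\Aut((1,3)_{19}^N)=7>6$ excludes $(1,3)_{19}^N\to(1,3)_{14}^N$, and analogous inequalities dispose of the remaining potential pairs. The listed closures are then obtained by intersecting those of Theorem \ref{theorem:irreducible_components_1_3} with $\mathcal{ASC}^{(1,3)}$, i.e.\ by retaining only the associative members. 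The main obstacle is essentially organisational rather than computational: verifying completeness of the covering, which is immediate from the two inclusions $(1,3)_{14}^N\in\overline{(1,3)_6^G}$ and $(1,3)_{20}^N\in\overline{\mathcal{J}^G}$ for every associative $\mathcal{J}$, so that all seven associative orbits are accounted for by the five listed irreducible components.
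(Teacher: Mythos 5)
Your proposal is correct and follows exactly the route the paper intends (the corollary is stated there as a ``direct consequence'': since $\mathcal{ASC}^{(1,3)}$ is Zariski-closed, orbit closures of associative superalgebras stay inside it, so one only has to restrict the Hasse diagram to the seven associative orbits and pick out the maximal ones). Your bookkeeping is also essentially complete: the pairwise non-deformations among $\{(1,3)_1,(1,3)_6,(1,3)_9,(1,3)_{12},(1,3)_{19}^N\}$ that are not settled by $\dim\Aut$ alone (e.g.\ $(1,3)_6\not\to(1,3)_1$) are all recorded in Table~\ref{Non_degenerations_1_3}, as you implicitly rely on. One point worth flagging: your list of maximal associative orbits begins with $(1,3)_1$, whereas the corollary as printed begins with $\overline{(1,3)_4^G}=\{(1,3)_4,(1,3)_{20}^N\}$. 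Here you are right and the printed statement is a typo: $(1,3)_4$ is marked NA in Table~\ref{table:JSA_1_3} (indeed $(e_1e_1)f_2\neq e_1(e_1f_2)$ for it), and by Theorem~\ref{theorem:irreducible_components_1_3} its orbit closure is the six-element set, not $\{(1,3)_4,(1,3)_{20}^N\}$; the intended component is $\overline{(1,3)_1^G}=\{(1,3)_1,(1,3)_{20}^N\}$, exactly as you derive.
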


\begin{corollary}
The subvariety $\mathcal{NJS}^{(m,n)} \subset \mathcal{JS}^{(1,3)}$ 
has $2$ irreducible components given by 
\begin{align*}
\overline{ ((1,3)^N_{15})^G }  & =  \{ (1,3)_{14}^N, (1,3)_{15}^N,(1,3)_{19}^N, (1,3)_{20}^N  \}    \\
\overline{ ((1,3)_{18}^N)^G }  & = \{  (1,3)_{14}^N,  (1,3)_{18}^N, (1,3)_{20}^N  \}   
\end{align*}
\end{corollary}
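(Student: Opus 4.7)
The plan is to deduce this corollary directly from the structural information already assembled for $\mathcal{JS}^{(1,3)}$, now restricted to the nilpotent locus. First, I would observe that nilpotency is expressible as the vanishing of $\mathcal{J}^r$ for some fixed $r$, which cuts out $\mathcal{NJS}^{(1,3)}$ as a Zariski-closed subvariety of $\mathcal{JS}^{(1,3)}$. Reading off the superscript ``N'' in Table \ref{table:JSA_1_3}, the $G$-orbits contained in this subvariety are exactly those of
$(1,3)^N_{14},\ (1,3)^N_{15},\ (1,3)^N_{18},\ (1,3)^N_{19},\ (1,3)^N_{20}$.

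Next, I would harvest from Table \ref{bases-(1, 3)} the deformations that stay inside $\mathcal{NJS}^{(1,3)}$, namely
$(1,3)^N_{15}\to(1,3)^N_{14}$, $(1,3)^N_{15}\to(1,3)^N_{19}$ and $(1,3)^N_{18}\to(1,3)^N_{14}$, supplemented by the general fact noted after Lemma \ref{Lema:nondeformation} that any superalgebra deforms to the trivial one $(1,3)^N_{20}$ via $g(t)=t^{-1}\id$. These give at once
$$\{(1,3)^N_{14},(1,3)^N_{15},(1,3)^N_{19},(1,3)^N_{20}\}\subseteq \overline{((1,3)^N_{15})^G},$$
$$\{(1,3)^N_{14},(1,3)^N_{18},(1,3)^N_{20}\}\subseteq \overline{((1,3)^N_{18})^G}.$$

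For the reverse inclusions I need to verify that $(1,3)^N_{15}$ and $(1,3)^N_{18}$ are mutually non-deformable, which makes them rigid within $\mathcal{NJS}^{(1,3)}$. The non-deformations $(1,3)^N_{18}\not\to (1,3)^N_{15}$ and $(1,3)^N_{18}\not\to (1,3)^N_{19}$ are already recorded in Table \ref{Non_degenerations_1_3}. For the remaining direction $(1,3)^N_{15}\not\to (1,3)^N_{18}$, I would invoke Lemma \ref{lemma:invariants}(i): from Table \ref{table:JSA_1_3} one has $\dim\Aut((1,3)^N_{15})=5>4=\dim\Aut((1,3)^N_{18})$, which contradicts the strict inequality required by a deformation.

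Finally, I would observe that the two displayed closures together exhaust all five nilpotent orbits, so they form an irredundant covering of $\mathcal{NJS}^{(1,3)}$ by Zariski closures of orbits; since neither closure contains the rigid generator of the other, these closures are genuinely distinct irreducible components of $\mathcal{NJS}^{(1,3)}$, and there are no others. The work is essentially bookkeeping, and I do not anticipate a real obstacle here: the only potentially delicate point is the mutual non-deformation of $(1,3)^N_{15}$ and $(1,3)^N_{18}$, and for this the automorphism-dimension invariant and Table \ref{Non_degenerations_1_3} together do the job.
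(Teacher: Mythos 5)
Your argument is correct and follows essentially the same route as the paper, which presents this corollary as direct bookkeeping from Tables \ref{table:JSA_1_3}, \ref{Non_degenerations_1_3} and \ref{bases-(1, 3)}: the five nilpotent orbits, the three recorded deformations plus the deformation to the trivial superalgebra, the recorded non-deformations $18\not\to 15,19$, and the automorphism-dimension obstruction to $15\to 18$. Nothing is missing.
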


\section{Jordan Superalgebras of Type $(3,1)$}



In this section, we proceed with the study  of the variety $\mathcal{JS}^{(3,1)}$.  
There exist, up to isomorphism, $60$  Jordan superalgebras of type $(3,1)$ (see \cite{SuperPowerAssoc4}, Table 16, p. 1630).  Table \ref{table:3_1} 
gives representatives for  isomorphism classes and some additional useful information, 
 namely the dimensions of the automorphism groups. As in the previous section, the labels ``A", ``NA" and ``N" indicate that the superalgebra is associative, non-associative and nilpotent, respectively.


\begin{longtable}[H]{rlcc}
\caption{\label{table:3_1}Jordan superalgebras of type $(3,1)$.}\\
Label  & Multiplication table & $\dim(\Aut(\mathcal{J})) $ & \\
\hline
\endhead
$(3,1)_1$:&$\;\;e_1^2 = e_1$, $\;\;e_2^2 = e_2$, $\;\;e_1e_3 = e_3$, $\;\;e_1f_1 = \frac{1}{2}f_1$, $\;\;e_2f_1 = \frac{1}{2}f_1$. & $2$&NA \\
$(3,1)_2$:&$\;\;e_1^2 = e_1$, $\;\;e_2^2 = e_3$, $\;\;e_1e_2 = e_2$, $\;\;e_1e_3 = e_3$, $\;\;e_1f_1 = f_1$.
& $3$&A\\
$(3,1)_3$:&$\;\;e_1^2 = e_1$, $\;\;e_2^2 = e_3$, $\;\;e_1e_2 = e_2$, $\;\;e_1e_3 = e_3$, $\;\;e_1f_1 = \frac{1}{2}f_1$. 
& $3$&NA\\
$(3,1)_4$:&$\;\;e_1^2 = e_1$, $\;\;e_1e_2 = e_2$, $\;\;e_1e_3 = e_3$, $\;\;e_1f_1 = f_1$.
&$5$&A\\
$(3,1)_5$:&$\;\;e_1^2 = e_1$, $\;\;e_1e_2 = e_2$, $\;\;e_1e_3 = e_3$, $\;\;e_1f_1 = \frac{1}{2}f_1$.
& $5$&
	NA \\
$(3,1)_6$:&$\;\;e_1^2 = e_1$, $\;\;e_2^2 = e_1+e_3$, $\;\;e_3^2 = e_3$, $\;\;e_1e_2 = \frac{1}{2}e_2$, $\;\;e_2e_3 = \frac{1}{2}e_2$, & $2$&NA \\
 & $\;\;e_1f_1 = \frac{1}{2}f_1$, $\;\;e_3f_1 = \frac{1}{2}f_1$.  && \\
$(3,1)_{7}$:&$\;\;e_1^2 = e_1$, $\;\;e_1e_2 = \frac{1}{2}e_2$, $\;\;e_1e_3 = e_3$, $\;\;e_1f_1 = f_1$. 
&$4$& NA \\
$(3,1)_{8}$:&$\;\;e_1^2 = e_1$, $\;\;e_1e_2 = \frac{1}{2}e_2$, $\;\;e_1e_3 = e_3$, $\;\;e_1f_1 = \frac{1}{2}f_1$. 
&$4$& NA\\
$(3,1)_{9}$:&$\;\;e_1^2 = e_1$, $\;\;e_1e_2 = \frac{1}{2}e_2$, $\;\;e_1e_3 = \frac{1}{2}e_3$, $\;\;e_1f_1 = f_1$. 
&$7$ & NA \\
$(3,1)_{10}$:&$\;\;e_1^2 = e_1$, $\;\;e_1e_2 = \frac{1}{2}e_2$, $\;\;e_1e_3 = \frac{1}{2}e_3$, $\;\;e_1f_1 = \frac{1}{2}f_1$.  &  $7$&NA\\
$(3,1)_{11}$:&$\;\;e_1^2 = e_1$, $\;\;e_2^2 = e_3$, $\;\;e_1e_2 = \frac{1}{2}e_2$, $\;\;e_1f_1 = f_1$.  
& $3$&NA\\
$(3,1)_{12}$:&$\;\;e_1^2 = e_1$, $\;\;e_2^2 = e_3$, $\;\;e_1e_2 = \frac{1}{2}e_2$, $\;\;e_1f_1 = \frac{1}{2}f_1$. 
& $3$&NA\\
$(3,1)_{13}$:&$\;\;e_1^2 = e_1$, $\;\;e_2^2 = e_3$, $\;\;e_1e_2 = \frac{1}{2}e_2$, $\;\;e_1e_3 = e_3$, $\;\;e_1f_1 = f_1$. & $3$ &NA \\
$(3,1)_{14}$:&$\;\;e_1^2 = e_1$, $\;\;e_2^2 = e_3$, $\;\;e_1e_2 = \frac{1}{2}e_2$, $\;\;e_1e_3 = e_3$, $\;\;e_1f_1 = \frac{1}{2}f_1$.  & $3$ & NA\\
$(3,1)_{15}$:&$\;\;e_1^2 = e_1$, $\;\;e_2^2 = e_2$, $\;\;e_1e_3 = \frac{1}{2}e_3$, $\;\;e_2e_3 = \frac{1}{2}e_3$, $\;\;e_2f_1 = \frac{1}{2}f_1$. & $3$ & NA\\
$(3,1)_{16}$:&$\;\;e_1^2 = e_1$, $\;\;e_2^2 = e_2$, $\;\;e_1e_3 = \frac{1}{2}e_3$, $\;\;e_2e_3 = \frac{1}{2}e_3$, $\;\;e_2f_1 = f_1$.&$3$ & NA\\
$(3,1)_{17}$:&$\;\;e_1^2 = e_1$, $\;\;e_2^2 = e_2$, $\;\;e_1e_3 = \frac{1}{2}e_3$, $\;\;e_2e_3 = \frac{1}{2}e_3$, $\;\;e_1f_1 = \frac{1}{2}f_1$.  & $3$ & NA\\
& $\;\;e_2f_1 = \frac{1}{2}f_1$.&&\\
$(3,1)_{18}$:&$\;\;e_1^2 = e_1$, $\;\;e_2^2 = e_2$, $\;\;e_2e_3 = \frac{1}{2}e_3$, $\;\;e_1f_1 = \frac{1}{2}f_1$, $\;\;e_2f_1 = \frac{1}{2}f_1$. 
& $3$& NA \\
$(3,1)_{19}$:& $\;\;e_1^2=e_1$, $\;\;e_2^2=e_2$, $\;\;e_3^2=e_3$. &$1$&A\\
$(3,1)_{20}$:& $\;\;e_1^2=e_1$, $\;\;e_2^2=e_2$, $\;\;e_3^2=e_3$, $\;\;e_3f_1=f_1$. & $1$&A\\
$(3,1)_{21}$:& $\;\;e_1^2=e_1$, $\;\;e_2^2=e_2$, $\;\;e_3^2=e_3$, $\;\;e_3f_1= \frac{1}{2}f_1$. &$1$& NA\\
$(3,1)_{22}$:& $\;\;e_1^2=e_1$, $\;\;e_2^2=e_2$, $\;\;e_3^2=e_3$, $\;\;e_1f_1= \frac{1}{2}f_1$, $\;\;e_2f_1= \frac{1}{2}f_1$. & $1$& NA\\
$(3,1)_{23}$:& $\;\;e_1^2=e_1$, $\;\;e_2^2=e_2$, $\;\;e_1e_3=e_3$. &$2$&A\\
$(3,1)_{24}$:& $\;\;e_1^2=e_1$, $\;\;e_2^2=e_2$, $\;\;e_1e_3=e_3$, $\;\;e_1f_1=f_1$. & $2$&A\\
$(3,1)_{25}$:& $\;\;e_1^2=e_1$, $\;\;e_2^2=e_2$, $\;\;e_1e_3=e_3$, $\;\;e_1f_1=\frac{1}{2}f_1$.  & $2$& NA \\
$(3,1)_{26}$:& $\;\;e_1^2=e_1$, $\;\;e_2^2=e_2$, $\;\;e_1e_3=e_3$, $\;\;e_2f_1=f_1$. &$2$ &A\\
$(3,1)_{27}$:& $\;\;e_1^2=e_1$, $\;\;e_2^2=e_2$, $\;\;e_1e_3=e_3$, $\;\;e_2f_1=\frac{1}{2}f_1$.  & $2$&NA \\
$(3,1)_{28}$:& $\;\;e_1^2=e_1$, $\;\;e_2^2=e_3$, $\;\;e_1e_2=e_2$, $\;\;e_1e_3=e_3$. &$3$ &A\\
$(3,1)_{29}$:& $\;\;e_1^2=e_1$,  $\;\;e_1e_2=e_2$, $\;\;e_1e_3=e_3$. & $5$&A \\
$(3,1)_{30}$:& $\;\;e_1^2=e_1$,  $\;\;e_2^2=e_2$. &$2$ &A \\
$(3,1)_{31}$:& $\;\;e_1^2=e_1$,  $\;\;e_2^2=e_2$, $\;\;e_1f_1=f_1$.  &$2$  &A\\
$(3,1)_{32}$:& $\;\;e_1^2=e_1$,  $\;\;e_2^2=e_2$, $\;\;e_1f_1=\frac{1}{2}f_1$.  &$2$ &NA \\
$(3,1)_{33}$:& $\;\;e_1^2=e_1$,  $\;\;e_2^2=e_2$, $\;\;e_1f_1=\frac{1}{2}f_1$, $\;\;e_2f_1=\frac{1}{2}f_1$. &  $2$ & NA \\
$(3,1)_{34}$:& $\;\;e_1^2=e_1$,  $\;\;e_1e_2=e_2$.  &$3$ & A \\
$(3,1)_{35}$:& $\;\;e_1^2=e_1$,  $\;\;e_1e_2=e_2$, $\;\;e_1f_1=f_1$. &$3$ &A  \\
$(3,1)_{36}$:& $\;\;e_1^2=e_1$,  $\;\;e_1e_2=e_2$, $\;\;e_1f_1=\frac{1}{2}f_1$. &$3$  & NA \\
$(3,1)_{37}$:& $\;\;e_1^2=e_1$,  $\;\;e_2^2=e_3$.&$3$ & A \\
$(3,1)_{38}$:& $\;\;e_1^2=e_1$,  $\;\;e_2^2=e_3$, $\;\;e_1f_1=f_1$.& $3$ &A \\
$(3,1)_{39}$:& $\;\;e_1^2=e_1$,  $\;\;e_2^2=e_3$, $\;\;e_1f_1=\frac{1}{2}f_1$. &$3$ &NA \\
$(3,1)_{40}^N$:& $\;\;e_1^2=e_2$,  $\;\;e_1e_2=e_3$. &$4$ & A \\
$(3,1)_{41}$:& $\;\;e_1^2=e_1$. & $5$&A\\
$(3,1)_{42}$:& $\;\;e_1^2=e_1$,  $\;\;e_1f_1=f_1$.& $5$&A\\
$(3,1)_{43}$:& $\;\;e_1^2=e_1$,  $\;\;e_1f_1=\frac{1}{2}f_1$. & $5$& NA\\
$(3,1)_{44}^N$:& $\;\;e_1e_2=e_3$. & $5$&A \\
$(3,1)_{45}^N$:& $\;\;e_1^2=e_2$. &$6$ & A \\
$(3,1)_{46}$:& $\;\;e_1^2=e_1$, $\;\; e_2^2=e_1+e_3$, $\;\;e_3^2=e_3$, $\;\;  e_1e_2=\frac{1}{2}e_2$, $\;\;e_2e_3=\frac{1}{2}e_2$. & $2$ &NA \\
$(3,1)_{47}$:& $\;\;e_1^2=e_1$, $\;\;  e_1e_2=\frac{1}{2}e_2$, $\;\;e_1e_3=e_3$. & $4$ &NA \\
$(3,1)_{48}$:&$\;\;e_1^2=e_1$, $\;\;  e_1e_2=\frac{1}{2}e_2$, $\;\;e_1e_3=\frac{1}{2}e_3$.  &$7$  & NA \\
$(3,1)_{49}$:& $\;\;e_1^2=e_1$, $\;\;  e_2^2=e_3$, $\;\;e_1e_2=\frac{1}{2}e_2$. &$3$ & NA  \\
$(3,1)_{50}$:&$\;\;e_1^2=e_1$, $\;\;  e_2^2=e_3$, $\;\;e_1e_2=\frac{1}{2}e_2$, $\;\;e_1e_3=e_3$. &$3$ & NA \\
$(3,1)_{51}$:& $\;\;e_1^2=e_1$, $\;\;  e_2^2=e_2$, $\;\;e_1e_3=\frac{1}{2}e_3$, $\;\;e_2e_3=\frac{1}{2}e_3$. &$3$  & NA \\
$(3,1)_{52}$:& $\;\;e_1^2=e_1$, $\;\;  e_2^2=e_2$, $\;\;e_2e_3=\frac{1}{2}e_3$. & $3$ & NA  \\
$(3,1)_{53}$:& $\;\;e_1^2=e_1$, $\;\;  e_2^2=e_2$, $\;\;e_2e_3=\frac{1}{2}e_3$, $\;\;e_1f_1=f_1$. &$3$ & NA\\
$(3,1)_{54}$:& $\;\;e_1^2=e_1$, $\;\;  e_2^2=e_2$, $\;\;e_2e_3=\frac{1}{2}e_3$, $\;\;e_1f_1=\frac{1}{2}f_1$. &$3$& NA \\
$(3,1)_{55}$:& $\;\;e_1^2=e_1$, $\;\;  e_2^2=e_2$, $\;\;e_2e_3=\frac{1}{2}e_3$,$\;\;e_2f_1=f_1$. & $3$ & NA \\
$(3,1)_{56}$:& $\;\;e_1^2=e_1$, $\;\;  e_2^2=e_2$, $\;\;e_2e_3=\frac{1}{2}e_3$, $\;\;e_2f_1=\frac{1}{2}f_1$. &$3$ & NA\\
$(3,1)_{57}$:& $\;\;e_2^2=e_2$, $\;\;e_2e_3=\frac{1}{2}e_3$. &  $4$ & NA\\
$(3,1)_{58}$:& $\;\;e_2^2=e_2$, $\;\;e_2e_3=\frac{1}{2}e_3$, $\;\;e_2f_1=f_1$. & $4$ & NA \\
$(3,1)_{59}$:& $\;\;e_2^2=e_2$, $\;\;e_2e_3=\frac{1}{2}e_3$, $\;\;e_2f_1=\frac{1}{2}f_1$.  &$4$ &NA \\
$(3,1)_{60}^N$:& $\;\;e_ie_j=0$, $e_if_1=0$, $\;\;i,j \in \{1,2,3\}$. &$10$&A\\
\hline
\end{longtable}

As in the previous section, Table \ref{table:non_degenerations_3_1} ensures
the non-existence of deformations $(3,1)_i\not\rightarrow (3,1)_j$. Recall that we first use items (i) and (vi) of Lemma \ref{lemma:invariants} together with the information in Table \ref{table:3_1}.

\begin{longtable}[H]{| l | c |}
\caption{\label{table:non_degenerations_3_1}Non-deformations between Jordan Superalgebras of type  $(3,1)$}
\\
\hline
\multicolumn{1}{|c|}{$\mathcal{J} \not\rightarrow \mathcal{J}^\prime $ } & Reason \\
\hline
\endhead
$ i  \not\rightarrow 7, \dots, 18, 47, \dots,  50$, for $i \in \{1, 25, 27\}$; $\;\; i  \not\rightarrow 41, 42 $, for $i \in \{ 2, 28\}$; &  \\
$3 \not\rightarrow 7, \dots, 10, 41, 42,  43, 47, 48, 57, 58, 59 $;  & \\
$i  \not\rightarrow 7 , \dots, 10, 13, 14, 18, 34 , \dots, 39, 41, 42, 43, 47, 48, 50, 52, \dots, 56$,  &  \\
 for $i \in \{ 6, 46\}$;   &  \\
$ i  \not\rightarrow 4, 5, 9, 10, 29, 41, 42, 43, 48$, for $i \in \{ 7, 8, 47, 57, 58, 59 \}$; & \\
$i  \not\rightarrow  4,  \dots, \hat 6, \dots, 10,  29, 41, 42, 43, 47, 48$,  for $i \in \{11, 12, 49\}$;  & \\
$i  \not\rightarrow  4, 5, 9, 10, 29, 41, 42, 43, 48, 57, 58, 59$, for $i \in \{13, 14, 50\}$;  & $\mathcal{J}_0  \not\rightarrow \mathcal{J}^\prime_0$ \\
$i  \not\rightarrow  7, \dots, 10, 41, 42, 43, 47, 48$, for $i \in \{15, 16, 17, 51\}$;   & \\
$i  \not\rightarrow 4, 5, 9, 10,  29, 48, 57, 58, 59$,  for $i \in \{18,  52, \dots, 56 \}$; & \\
$i \not\rightarrow 6, \dots, 18, 46, \dots, 59$, for $i \in  \{21, 22\}$; & \\
$i  \not\rightarrow 11, \dots, 18, 49, \dots, 56$, for $i \in  \{23, 24, 26\}$; & \\
$i  \not\rightarrow 2, 4, 11, \dots, 18, 28, 29, 49 , \dots, 56$, for $i \in  \{30, 31\}$; & \\
$i  \not\rightarrow 2, \dots,  \hat 6, \dots, 18, 28, 29, 47 , \dots, 59$,  for $i \in  \{32, 33\} $; &   \\
$i  \not\rightarrow 9, 10, 48$,  for $i \in  \{5, 43\} $;&   \\
$i   \not\rightarrow 4, 29, 41, 42 $,  for $i \in  \{34, 35, 40\}$; &   \\
$36  \not\rightarrow  4, \dots, \hat 6 , \dots, 10, 29, 41, 42, 43, 47, 48, 57, 58, 59 $;  & \\
\hline
$i   \not\rightarrow 4, 29$,  for $i \in  \{37, 38\}$;  &  $\mathcal{J}_0  \not\rightarrow \mathcal{J}^\prime_0$\\
$39  \not\rightarrow 4, \dots, \hat 6 , \dots,  10, 29, 47, 48, 57, 58, 59 $.  & \\
\hline
$19  \not\rightarrow  2, 4, 24, 26,  31, 35, 38, 42$; $\;\; 23  \not \rightarrow 2, 4, 35, 38, 42 $; 
 & $  \dim((\mathcal{J}^2)_1) <     \dim((\mathcal{J}^\prime)^2)_1)$   \\
$28 \not \rightarrow 4 $; $\;\; 30  \not \rightarrow 35, 38,  42$; $\;\;37 \not \rightarrow 42$;  $\;\;46  \not \rightarrow 2, 3$;  $\;\;52 \not \rightarrow 43 $. & \\
\hline
$1 \not \rightarrow 3, 5, 28, 29, 34, 35, 37, 38, 41, 42 $; $\;\;  2 \not \rightarrow 29  $; $\;\;  3 \not \rightarrow  4, 29  $;   
 & \\
$6 \not \rightarrow  5,  11, 15,  16, 29, 49, 51, 57, 58$; $\;\; 11 \not \rightarrow 57, 59  $; $\;\; 12 \not \rightarrow 57, 58  $;   
& \\
$\;\; 13\not \rightarrow 8, 47 $; $ \;\; 14 \not \rightarrow 7, 47  $;  $\;\; 15 \not \rightarrow 4, 29, 58$;  $\;\; 16 \not \rightarrow 5, 29, 59 $;    
&\\
$\;\; 17 \not \rightarrow 5, 29, 57, 58$; $\;\; 18 \not \rightarrow 8,  41, 42,  47$;   & \\ 
$ \;\; 21 \not \rightarrow  1, 2,  4, 23, 24, 26, 28, 29, 31, 33, 35 $; 
&\\
$22 \not \rightarrow 3, 5, 23, 25, 26,  27, 30, 31, 34, 38,  42  $;  $\;\;24 \not \rightarrow  28, 34, 38, 42 $; &\\
$25 \not \rightarrow 2, 4, 28, 29, 34, 35, 38, 39, 42, 43  $; $\;\;26 \not \rightarrow  28, 29, 35, 37, 41$;
& $\mathcal{J} \not \rightarrow \mathcal{J}^\prime$\\
$27 \not \rightarrow  2, 4, 28, 29, 35 , \dots, 38, 41$; $\;\;32 \not \rightarrow  34, 35, 38, 42 $;
& as algebras \\
$33 \not \rightarrow 34, 36, 37, 38, 41, 42$; $\;\;46 \not \rightarrow  4, 5, 11, 12, 15, 16, 17$; $\;\; 38 \not \rightarrow 41$;  
& \\
$39 \not \rightarrow 41, 42 $;  $\;\;49 \not \rightarrow  58, 59 $;  $\;\;50 \not \rightarrow 7, 8 $;  $\;\; 51 \not \rightarrow 4, 5,  58, 59   $;  
& \\
$ 52 \not \rightarrow 7, 8, 42 $;  
$\;\; 53 \not \rightarrow 8, 41, 47$; 
$ \;\;54 \not \rightarrow 7, 41, 42, 47$; 
$ \;\;55 \not \rightarrow 8, 42, 43  $;  & \\
$56 \not \rightarrow 7, 42, 43, 47.$&\\
\hline
\end{longtable}

Table \ref{table:3_1_degenerations_first} gives all possible essential deformations
between two Jordan superalgebras of type $(3,1)$  and the other deformations can be obtained by transitivity.

\begin{longtable}[H]{|l|llll|}
\caption{\label{table:3_1_degenerations_first} Deformations between Jordan Superalgebras of type  $(3,1)$}
\\
\hline
$\mathcal{J}  \rightarrow \mathcal{J}^\prime$  & Change of basis &  & & \\
\hline
\endhead
$(3, 1)_{1} \rightarrow  (3, 1)_{2} $& $E_1=e_1+e_2$, &$E_2=te_2+e_3$,& $E_3= t^2e_2$, &$F_1=f_1$\\
\hline
$(3, 1)_{1} \rightarrow  (3, 1)_{36} $& $E_1=e_1$, &$E_2=e_3$,& $E_3= te_2$, &$F_1=\frac{1}{2}f_1$\\
\hline
$(3, 1)_{1} \rightarrow  (3, 1)_{39} $& $E_1=e_2$, &$E_2=te_1+e_3$,& $E_3= -t^2e_1$, &$F_1=f_1$\\
\hline
$(3, 1)_{2} \rightarrow  (3, 1)_{4} $& $E_1=e_1$, &$E_2=te_2$,& $E_3=  e_3$, &$F_1=f_1$\\
\hline
$(3, 1)_{2} \rightarrow  (3, 1)_{40}^N $& $E_1=te_1+e_2$, &$E_2=t^2e_1+2te_2+e_3$,& $E_3= t^3e_1+3t^2e_2+3te_3$, &$F_1=f_1$\\
\hline
$(3, 1)_{3} \rightarrow  (3, 1)_{5} $ & $E_1= e_1$, & $E_2= te_2$, & $E_3= e_3$, &$ F_1=f_1$\\
\hline
$(3, 1)_{3} \rightarrow  (3, 1)_{40}^N $& $E_1=te_1+e_2$, &$E_2=t^2e_1+2 te_2+e_3  $,& $E_3= t^3e_1
+3t^2 e_2+ 3te_3$, &$F_1=f_1$\\
\hline
$(3, 1)_{4} \rightarrow  (3, 1)_{45}^N $& $E_1=t(e_1+e_2)$, &$E_2=t^2e_2$,& $E_3=  t^2e_1+t^3e_2+e_3$, &$F_1=f_1$\\
\hline
$(3, 1)_{5} \rightarrow  (3, 1)_{45}^N $ & $E_1= t(e_1+e_2)$, & $E_2= t^2e_2$, & $E_3=t^2e_1+t^3e_2+ e_3$, &$ F_1=f_1$\\
\hline
$(3, 1)_{6} \rightarrow  (3, 1)_{12} $& $E_1=e_1$, &$E_2=te_2$,& $E_3= 
 t^2(e_1+ e_3)$, &$F_1=f_1$\\
\hline
$(3, 1)_{6} \rightarrow  (3, 1)_{17} $& $E_1=e_1$, &$E_2=e_3$,& $E_3= 
 te_2$, &$F_1=f_1$\\
\hline
$(3, 1)_{7} \rightarrow  (3, 1)_{44}^N $ & $E_1= e_2+e_3$, & $E_2= te_1$, & $E_3=\frac{1}{2}te_2+ t e_3$, &$ F_1=f_1$\\
\hline
$(3, 1)_{8} \rightarrow  (3, 1)_{44}^N $ & $E_1= e_2+e_3$, & $E_2= t e_1$, & $E_3= \frac{1}{2}t e_2+ t e_3$, &$ F_1=f_1$\\
\hline
$(3, 1)_{11} \rightarrow  (3, 1)_{58} $ & $E_1= e_3$, & $E_2= e_1$, & $E_3= te_2$, &$ F_1=f_1$\\
\hline
$(3, 1)_{12} \rightarrow  (3, 1)_{59} $ & $E_1= e_3$, & $E_2= e_1$, & $E_3= te_2$, &$ F_1=f_1$\\
\hline
$(3, 1)_{13} \rightarrow  (3, 1)_{7} $ & $E_1= e_1$, & $E_2= te_2$, & $E_3= e_3$, &$ F_1=f_1$\\
\hline
$(3, 1)_{14} \rightarrow  (3, 1)_{8} $ & $E_1= e_1$, & $E_2= t^2e_2$, & $E_3= t^2e_3$, &$ F_1=f_1$\\
\hline
$(3, 1)_{15} \rightarrow  (3, 1)_{5} $ & $E_1= e_1+e_2$, & $E_2= te_2$, & $E_3= e_3$, &$ F_1=f_1$\\
\hline
$(3, 1)_{15} \rightarrow  (3, 1)_{57} $ & $E_1= te_2$, & $E_2= e_1$, & $E_3=  e_3$, &$ F_1=f_1$\\
\hline
$(3, 1)_{15} \rightarrow  (3, 1)_{59} $ & $E_1= te_1$, & $E_2= e_2$, & $E_3= t e_3$, &$ F_1=f_1$\\
\hline
$(3, 1)_{16} \rightarrow  (3, 1)_{4} $ & $E_1= e_1+e_2$, & $E_2= te_2$, & $E_3= e_3$, &$ F_1=f_1$\\
\hline
$(3, 1)_{16} \rightarrow  (3, 1)_{57} $ & $E_1= te_2$, & $E_2= e_1$, & $E_3=  e_3$, &$ F_1=f_1$\\
\hline
$(3, 1)_{16} \rightarrow  (3, 1)_{58} $ & $E_1= te_1$, & $E_2= e_2$, & $E_3=  te_3$, &$ F_1=f_1$\\
\hline
$(3, 1)_{17} \rightarrow  (3, 1)_{4} $ & $E_1= e_1+e_2$, & $E_2= te_2$, & $E_3= e_3$, &$ F_1=f_1$\\
\hline
$(3, 1)_{17} \rightarrow  (3, 1)_{59} $ & $E_1= te_1$, & $E_2= e_2$, & $E_3=  t e_3$, &$ F_1=f_1$\\
\hline
$(3, 1)_{18} \rightarrow  (3, 1)_{7} $ & $E_1= e_1+e_2$, & $E_2= e_3$, & $E_3=  t e_2$, &$ F_1=f_1$\\
\hline
$(3, 1)_{18} \rightarrow  (3, 1)_{43} $ & $E_1= e_1$, & $E_2= te_2$, & $E_3= e_3$, &$ F_1=f_1$\\
\hline
$(3, 1)_{19} \rightarrow  (3, 1)_{23} $ & $E_1= e_1+e_3$, & $E_2=e_2$, & $E_3=te_3$, &$ F_1=f_1$\\
\hline
$(3, 1)_{19} \rightarrow  (3, 1)_{30} $ & $E_1= e_1$, & $E_2=e_2$, & $E_3=te_3$, &$ F_1=f_1$\\
\hline
$(3, 1)_{20} \rightarrow  (3, 1)_{24} $ & $E_1= e_1+e_3$, & $E_2= e_2$, & $E_3= te_3$, &$ F_1=f_1$\\
\hline
$(3, 1)_{20} \rightarrow  (3, 1)_{26} $& $E_1=e_1+e_2$, &$E_2=e_3$,& $E_3= te_2$, &$F_1=f_1$\\
\hline
$(3, 1)_{20} \rightarrow  (3, 1)_{30} $& $E_1=e_1$, &$E_2=e_2$,& $E_3= te_3$, &$F_1=f_1$\\
\hline
$(3, 1)_{20} \rightarrow  (3, 1)_{31} $& $E_1=e_3$, &$E_2=e_2$,& $E_3= te_1$, &$F_1=f_1$\\
\hline
$(3, 1)_{21} \rightarrow  (3, 1)_{25} $ & $E_1= e_1+e_3$, & $E_2=e_2$, & $E_3=te_1$, &$ F_1=f_1$\\
\hline
$(3, 1)_{21} \rightarrow  (3, 1)_{27} $ & $E_1= e_1+e_2$, & $E_2=e_3$, & $E_3=te_2$, &$ F_1=f_1$\\
\hline
$(3, 1)_{21} \rightarrow  (3, 1)_{30} $ & $E_1= e_1$, & $E_2=e_2$, & $E_3=te_3$, &$ F_1=f_1$\\
\hline
$(3, 1)_{21} \rightarrow  (3, 1)_{32} $ & $E_1= e_3$, & $E_2=e_2$, & $E_3=te_1$, &$ F_1=f_1$\\
\hline
$(3, 1)_{22} \rightarrow  (3, 1)_{1} $& $E_1=e_1+e_3$, &$E_2=e_2$,& $E_3= te_3$, &$F_1=f_1$\\
\hline
$(3, 1)_{22} \rightarrow  (3, 1)_{24} $ & $E_1= e_1+e_2$, & $E_2=e_3$, & $E_3=te_2$, &$ F_1=f_1$\\
\hline
$(3, 1)_{22} \rightarrow  (3, 1)_{32} $ & $E_1= e_1$, & $E_2=e_3$, & $E_3=te_2$, &$ F_1=f_1$\\
\hline
$(3, 1)_{22} \rightarrow  (3, 1)_{33} $ & $E_1= e_1$, & $E_2=e_2$, & $E_3=te_3$, &$ F_1=f_1$\\
\hline
$(3, 1)_{23} \rightarrow  (3, 1)_{28} $ & $E_1= e_1+e_2$, & $E_2=te_2+e_3$, & $E_3=t^2e_2$, &$ F_1=f_1$\\
\hline
$(3, 1)_{23} \rightarrow  (3, 1)_{34} $& $E_1=e_1$, &$E_2=e_3$,& $E_3= te_2$, &$F_1=f_1$\\
\hline
$(3, 1)_{23} \rightarrow  (3, 1)_{37} $ & $E_1= e_2$, & $E_2=te_1+e_3$, & $E_3=-t^2e_1$, &$ F_1=f_1$\\
\hline
$(3, 1)_{24} \rightarrow  (3, 1)_{2} $& $E_1=e_1+e_2$, &$E_2=2te_1+\frac{1}{2}e_3$,& $E_3= te_3$, &$F_1=f_1$\\
\hline
$(3, 1)_{24} \rightarrow  (3, 1)_{35} $& $E_1=e_1$, &$E_2=e_3$,& $E_3= te_2$, &$F_1=f_1$\\
\hline
$(3, 1)_{24} \rightarrow  (3, 1)_{37} $& $E_1=e_2$, &$E_2=te_1-\frac{1}{2}e_3$,& $E_3= t^2e_1-te_3$, &$F_1=f_1$
\\
\hline
$(3, 1)_{25} \rightarrow  (3, 1)_{3} $& $E_1=e_1+e_2$, &$E_2=2te_1+ \frac{1}{2}e_3$,& $E_3= te_3$, &$F_1=f_1$\\
\hline
$(3, 1)_{25} \rightarrow  (3, 1)_{36} $ & $E_1=e_1$, & $E_2= e_3$, & $E_3= te_2$, &$ F_1=f_1$\\
\hline
$(3, 1)_{25} \rightarrow  (3, 1)_{37} $& $E_1=e_2$, &$E_2=te_1- \frac{1}{2}e_3$,& $E_3= t^2e_1-te_3$, &$F_1=f_1$\\
\hline
$(3, 1)_{26} \rightarrow  (3, 1)_{2} $& $E_1=e_1+e_2$, &$E_2=te_2+e_3$,& $E_3= t^2e_2$, &$F_1=f_1$\\
\hline
$(3, 1)_{26} \rightarrow  (3, 1)_{34} $& $E_1=e_1$, &$E_2=e_3$,& $E_3= te_2$, &$F_1=f_1$\\
\hline
$(3, 1)_{26} \rightarrow  (3, 1)_{38} $& $E_1=e_2$, &$E_2=te_1+e_3$,& $E_3= - t^2e_1$, &$F_1=f_1$\\
\hline
$(3, 1)_{27} \rightarrow  (3, 1)_{3} $& $E_1=e_1+e_2$, &$E_2=te_2+e_3$,& $E_3= t^2e_2$, &$F_1=f_1$\\
\hline
$(3, 1)_{27} \rightarrow  (3, 1)_{34} $& $E_1=e_1$, &$E_2=e_3$,& $E_3= t e_2$, &$F_1=f_1$\\
\hline
$(3, 1)_{27} \rightarrow  (3, 1)_{39} $& $E_1=e_2$, &$E_2=te_1+e_3$,& $E_3
= -  t^2 e_1$, &$F_1=f_1$\\
\hline
$(3, 1)_{28} \rightarrow  (3, 1)_{29} $ & $E_1=e_1$, & $E_2=te_2$, & $E_3=te_3$, &$ F_1=f_1$\\
\hline
$(3, 1)_{28} \rightarrow  (3, 1)_{40}^N $ & $E_1=te_1+e_2$, & $E_2=t^2e_1+2te_2+e_3$, & 
$E_3= t^3e_1+3t^2e_2+3te_3$, &$ F_1=f_1$\\
\hline
$(3, 1)_{29} \rightarrow  (3, 1)_{45}^N $ & $E_1= te_1+te_2$, & $E_2= t^2e_1+2t^2e_2$, & $E_3=te_3$, &$ F_1=f_1$\\
\hline
$(3, 1)_{30} \rightarrow  (3, 1)_{34} $ & $E_1=e_1+ e_2$, & $E_2=t e_2$, & $E_3=t e_3$, &$ F_1=f_1$\\
\hline
$(3, 1)_{30} \rightarrow  (3, 1)_{37} $ & $E_1=e_1$, & $E_2=t e_2+e_3$, & $E_3=t^2 e_2$, &$ F_1=f_1$\\
\hline
$(3, 1)_{30} \rightarrow  (3, 1)_{41} $ & $E_1=e_1$, & $E_2=te_2$, & $E_3=te_3$, &$ F_1=f_1$\\
\hline
$(3, 1)_{31} \rightarrow  (3, 1)_{35} $ & $E_1=e_1+e_2$, & $E_2=t e_2$, & $E_3=e_3$, &$ F_1=f_1$\\
\hline
$(3, 1)_{31} \rightarrow  (3, 1)_{37} $ & $E_1=e_2$, & $E_2=t e_1+e_3$, & $E_3=t^2e_1$, &$ F_1=f_1$\\
\hline
$(3, 1)_{31} \rightarrow  (3, 1)_{38} $& $E_1=e_1$, &$E_2=te_2+e_3$,& $E_3= t^2e_2$, &$F_1=f_1$\\
\hline
$(3, 1)_{32} \rightarrow  (3, 1)_{36} $ & $E_1=e_1+e_2$, & $E_2=t e_2$, & $E_3=e_3$, &$ F_1=f_1$\\
\hline
$(3, 1)_{32} \rightarrow  (3, 1)_{37} $& $E_1=e_2$, &$E_2=te_1+e_3$,& $E_3=  t^2 e_1$, &$F_1=f_1$\\
\hline
$(3, 1)_{32} \rightarrow  (3, 1)_{39} $ & $E_1=e_1$, & $E_2=t e_2+e_3$, & $E_3=t^2e_2$, &$ F_1=f_1$\\
\hline
$(3, 1)_{33} \rightarrow  (3, 1)_{35} $ & $E_1=e_1+e_2$, & $E_2=t e_2$, & $E_3=e_3$, &$ F_1=f_1$\\
\hline
$(3, 1)_{33} \rightarrow  (3, 1)_{39} $& $E_1=e_1$, &$E_2=te_2+e_3$,& $E_3= t^2 e_2$, &$F_1=f_1$\\
\hline
$(3, 1)_{34} \rightarrow  (3, 1)_{40}^N $ & $E_1=te_1+e_2+e_3$, & $E_2=t^2e_1+2te_2$, & $E_3=
t^3e_1+3t^2e_2$, &$ F_1=f_1$\\
\hline
$(3, 1)_{35} \rightarrow  (3, 1)_{40}^N $& $E_1=te_1+e_2+e_3$, &$E_2=t(e_2-e_3)$,& $E_3= t^2 e_2$, &$F_1=f_1$\\
\hline
$(3, 1)_{36} \rightarrow  (3, 1)_{40}^N $& $E_1=te_1+e_2+e_3$, &$E_2=t(e_2-e_3)  $,& $E_3= t^2e_2$, &$F_1=f_1$\\
\hline
$(3, 1)_{37} \rightarrow  (3, 1)_{40}^N $ & $E_1=te_1+e_2$, & $E_2=t^2e_1+e_3$, & $E_3=t^3e_1$, &$ F_1=f_1$\\
\hline
$(3, 1)_{37} \rightarrow  (3, 1)_{41} $ & $E_1=e_1$, & $E_2=te_2$, & $E_3=e_3$, &$ F_1=f_1$\\
\hline
$(3, 1)_{38} \rightarrow  (3, 1)_{40}^N $& $E_1=te_1+e_2$, &$E_2=t^2e_1+e_3$,& $E_3= t^3 e_1$, &$F_1=f_1$\\
\hline
$(3, 1)_{38} \rightarrow  (3, 1)_{42} $ & $E_1=e_1$, & $E_2=te_2$, & $E_3=e_3$, &$ F_1=f_1$\\
\hline
$(3, 1)_{39} \rightarrow  (3, 1)_{40}^N $& $E_1=te_1+e_2$, &$E_2=t^2e_1+e_3$,& $E_3= t^3 e_1$, &$F_1=f_1$\\
\hline
$(3, 1)_{39} \rightarrow  (3, 1)_{43} $ & $E_1=e_1$, & $E_2=te_2$, & $E_3=e_3$, &$ F_1=f_1$\\
\hline
$(3, 1)_{40}^N \rightarrow  (3, 1)_{44}^N $ & $E_1= te_1$, & $E_2=te_2$, & $E_3=t^2e_3$, &$ F_1=f_1$\\
\hline
$(3, 1)_{41} \rightarrow  (3, 1)_{45}^N $ & $E_1=te_1+e_2$, & $E_2=t^2e_1$, & $E_3=e_3$, &$ F_1=f_1$\\
\hline
$(3, 1)_{42} \rightarrow  (3, 1)_{45}^N $& $E_1=te_1+e_2$, &$E_2=t^2e_1$,& $E_3= e_3$, &$F_1=f_1$\\
\hline
$(3, 1)_{43} \rightarrow  (3, 1)_{45}^N $ & $E_1= te_1+e_2$, & $E_2= t^2e_1$, & $E_3= e_3$, &$ F_1=f_1$\\
\hline
$(3, 1)_{44}^N \rightarrow  (3, 1)_{45}^N $ & $E_1= e_1+e_2$, & $E_2=2e_3$, & $E_3=te_1$, &$ F_1=f_1$\\
\hline
$(3, 1)_{46} \rightarrow  (3, 1)_{49} $ & $E_1=e_1$, & $E_2=te_2$, & $E_3=t^2e_3$, &$ F_1=f_1$\\
\hline
$(3, 1)_{46} \rightarrow  (3, 1)_{51} $ & $E_1=e_1$, & $E_2=e_3$, & $E_3=te_2$, &$ F_1=f_1$\\
\hline
$(3, 1)_{47} \rightarrow  (3, 1)_{44}^N $ & $E_1= te_1$, & $E_2=e_2+e_3$, & $E_3= \frac{1}{2}te_2+te_3  $, &$ F_1=f_1$\\
\hline
$(3, 1)_{49} \rightarrow  (3, 1)_{57} $ & $E_1= e_3$, & $E_2= e_1$, & $E_3= te_2$, &$ F_1=f_1$\\
\hline
$(3, 1)_{50} \rightarrow  (3, 1)_{47} $ & $E_1=e_1$, & $E_2=te_2$, & $E_3=e_3$, &$ F_1=f_1$\\
\hline
$(3, 1)_{51} \rightarrow  (3, 1)_{57} $ & $E_1=te_1$, & $E_2=e_2$, & $E_3=e_3$, &$ F_1=f_1$\\
\hline
$(3, 1)_{52} \rightarrow  (3, 1)_{41} $ & $E_1=e_1$, & $E_2=te_2$, & $E_3=e_3$, &$ F_1=f_1$\\
\hline
$(3, 1)_{52} \rightarrow  (3, 1)_{47} $ & $E_1= e_1+e_2$, & $E_2=e_3$, & $E_3=te_1$, &$ F_1=f_1$\\
\hline
$(3, 1)_{53} \rightarrow  (3, 1)_{7} $ & $E_1= e_1+e_2$, & $E_2= e_3$, & $E_3=  t e_2$, &$ F_1=f_1$\\
\hline
$(3, 1)_{53} \rightarrow  (3, 1)_{42} $ & $E_1=e_1$, & $E_2=te_2$, & $E_3=e_3$, &$ F_1=f_1$\\
\hline
$(3, 1)_{54} \rightarrow  (3, 1)_{8} $ & $E_1= e_1+e_2$, & $E_2= e_3$, & $E_3=  t e_2$, &$ F_1=f_1$\\
\hline
$(3, 1)_{54} \rightarrow  (3, 1)_{43} $ & $E_1=e_1$, & $E_2=te_2$, & $E_3=e_3$, &$ F_1=f_1$\\
\hline
$(3, 1)_{55} \rightarrow  (3, 1)_{7} $ & $E_1= e_1+e_2$, & $E_2= e_3$, & $E_3=  t e_2$, &$ F_1=f_1$\\
\hline
$(3, 1)_{55} \rightarrow  (3, 1)_{41} $ & $E_1= e_1$, & $E_2= te_2$, & $E_3= e_3$, &$ F_1=f_1$\\
\hline
$(3, 1)_{56} \rightarrow  (3, 1)_{8} $ & $E_1= e_1+e_2$, & $E_2= e_3$, & $E_3=  t e_2$, &$ F_1=f_1$\\
\hline
$(3, 1)_{56} \rightarrow  (3, 1)_{41} $ & $E_1= e_1$, & $E_2= te_2$, & $E_3= e_3$, &$ F_1=f_1$\\
\hline
$(3, 1)_{57} \rightarrow  (3, 1)_{44}^N $ & $E_1=e_1+e_3$, & $E_2=te_2$, & $E_3=\frac{1}{2}te_3 $, &$ F_1=f_1$\\
\hline
$(3, 1)_{58} \rightarrow  (3, 1)_{44}^N $ & $E_1= e_1+e_3$, & $E_2= te_2$, & $E_3= \frac{1}{2} t e_3$, &$ F_1=f_1$\\
\hline
$(3, 1)_{59} \rightarrow  (3, 1)_{44}^N $ & $E_1= e_1+e_3$, & $E_2= t e_2$, & $E_3= \frac{1}{2} t e_3$, &$ F_1=f_1$\\
\hline
\end{longtable}

In order to find the irreducible components of the variety $\mathcal{JS}^{(3,1)}$, we looking for the rigid superalgebras in $\mathcal{JS}^{(3,1)}$. Notice that since there exist a finite number of isomorphism classes in $\mathcal{JS}^{(3,1)}$, it follows that a Jordan superalgebra $\mathcal{J}$ is rigid if and only if 
every deformation of $\mathcal{J}$ is isomorphic to $\mathcal{J}$, (see \cite{tesejenny}, Proposition 4.39, p. 70).

\begin{lemma}Let $\mathcal{J}=\mathcal{J}_0\oplus \mathcal{J}_1 \in \mathcal{JS}^{(3,1)}$. If the Jordan algebra $\mathcal{J}_0$ is rigid;  so also is $\mathcal{J}$.
\label{lemma:rigid_even_part}
\end{lemma}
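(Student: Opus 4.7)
The plan is to reduce rigidity of $\mathcal{J}$ in $\mathcal{JS}^{(3,1)}$ to a statement about the one-dimensional Jordan bimodules over $\mathcal{J}_0$, via the projection $\pi$ sending a superalgebra to its even part. By the characterization of rigidity recalled just above (finiteness of isomorphism classes), it suffices to check that every $\mathcal{J}'\in\mathcal{JS}^{(3,1)}$ with $\mathcal{J}'\to\mathcal{J}$ satisfies $\mathcal{J}'\simeq\mathcal{J}$. For such $\mathcal{J}'$, Lemma \ref{lemma:invariants}(iii) yields $\mathcal{J}'_0\to\mathcal{J}_0$ in the variety of three-dimensional Jordan algebras, and the hypothesis that $\mathcal{J}_0$ is rigid forces $\mathcal{J}'_0\simeq\mathcal{J}_0$. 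After an even change of basis one may assume $\mathcal{J}'_0=\mathcal{J}_0$.

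Since $\mathcal{J}_1=\F f_1$ is one-dimensional, supercommutativity forces $f_1^2=0$, so the superalgebra structure on $\mathcal{J}_0\oplus\F f_1$ is encoded by a linear functional $\beta:\mathcal{J}_0\to\F$ defined by $e_i f_1=\beta(e_i)f_1$ and constrained by the Jordan superidentity; let $V(\mathcal{J}_0)\subset \F^3$ denote the affine variety cut out by these conditions. Two functionals $\beta,\beta'$ yield isomorphic superalgebras iff they lie in the same $\Aut(\mathcal{J}_0)$-orbit, as the $\GL_1$ factor of $G$ acts trivially on $\beta$. Rigidity of $\mathcal{J}_0$ makes its $\GL_3$-orbit open, so $\pi^{-1}(\mathcal{J}_0^{\GL_3})$ is an open $G$-stable subvariety of $\mathcal{JS}^{(3,1)}$ whose $G$-orbits correspond to $\Aut(\mathcal{J}_0)$-orbits on $V(\mathcal{J}_0)$. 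From a one-parameter family of deformations realizing $\mathcal{J}'\to\mathcal{J}$, after composition with a suitable curve in $\Aut(\mathcal{J}_0)(\F(t))$ to keep the even part fixed, one extracts a curve in $V(\mathcal{J}_0)$ whose generic value lies in $\Aut(\mathcal{J}_0)\cdot\beta'$ and whose limit at $t=0$ is $\beta$; hence $\beta\in\overline{\Aut(\mathcal{J}_0)\cdot\beta'}$ inside $V(\mathcal{J}_0)$.

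The final step is to show $V(\mathcal{J}_0)$ is a \emph{finite} set, so that every $\Aut(\mathcal{J}_0)$-orbit is automatically closed, forcing $\beta\in\Aut(\mathcal{J}_0)\cdot\beta'$ and thereby $\mathcal{J}\simeq\mathcal{J}'$. Specializing the Jordan superidentity to $(e,e,e,f_1)$ for any idempotent $e\in\mathcal{J}_0$ produces the cubic $\beta(e)\bigl(2\beta(e)-1\bigr)\bigl(\beta(e)-1\bigr)=0$, so $\beta(e)\in\{0,\tfrac12,1\}$; mixed linearizations at pairs of idempotents pin down $\beta$ on their products. Because the rigid three-dimensional Jordan algebras classified in \cite{irynashesta} are neither nilpotent nor solvable (as highlighted in the introduction) and consequently admit complete systems of orthogonal idempotents, $\beta$ is determined by finitely many discrete choices. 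The principal obstacle is this uniform finiteness of $V(\mathcal{J}_0)$; in practice it is settled by a short case-by-case inspection of the handful of rigid three-dimensional Jordan algebras appearing as even parts in Table \ref{table:3_1}.
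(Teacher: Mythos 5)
Your argument reaches the right conclusion by a genuinely different route from the paper's. The paper's proof is a three-line dimension count: it reads off from Table \ref{table:3_1} and the classification in \cite{irynashesta} that $\dim(\Aut(\mathcal{J}))=\dim(\Aut(\mathcal{J}_0))+1$ for \emph{every} $\mathcal{J}\in\mathcal{JS}^{(3,1)}$, so that any proper deformation $\mathcal{J}'\to\mathcal{J}$ with $\mathcal{J}'_0\simeq\mathcal{J}_0$ (forced by Lemma \ref{lemma:invariants}(iii) and rigidity of $\mathcal{J}_0$) would give $\dim(\Aut(\mathcal{J}'))=\dim(\Aut(\mathcal{J}))$, contradicting Lemma \ref{lemma:invariants}(i). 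You instead analyse the fibre of the even-part projection: the odd multiplication is a functional $\beta\in\mathcal{J}_0^*$ (your cubic $\beta(e)(2\beta(e)-1)(\beta(e)-1)=0$ at idempotents is correct), and you argue that the set $V(\mathcal{J}_0)$ of admissible $\beta$ is finite, so $\Aut(\mathcal{J}_0)$-orbits on it are closed. This is more structural and explains \emph{why} the paper's empirical dimension formula holds: if $V(\mathcal{J}_0)$ is finite, each $\Aut(\mathcal{J}_0)$-orbit in it is finite, hence the stabilizer of $\beta$ has full dimension in $\Aut(\mathcal{J}_0)$ and $\dim(\Aut(\mathcal{J}))=\dim(\Aut(\mathcal{J}_0))+1$ follows. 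In fact, once you have finiteness of $V(\mathcal{J}_0)$ you could simply conclude by the paper's automorphism-dimension argument and skip your entire third paragraph about curves.

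Two steps need shoring up before this is a complete proof. First, the finiteness of $V(\mathcal{J}_0)$: the idempotent cubic only constrains $\beta$ on idempotents, and "complete systems of orthogonal idempotents" do not span a three-dimensional Jordan algebra with nonzero radical (e.g.\ $((3,1)_{13})_0$ has a two-dimensional radical and a single idempotent). You must also use the one-dimensional bimodule identity $2\beta(a)\beta(b)\beta(c)+\beta((ac)b)=\beta(a)\beta(bc)+\beta(b)\beta(ac)+\beta(c)\beta(ab)$ on radical elements — e.g.\ $a=b=c=n$ nilpotent gives $2\beta(n)^3+\beta(n^3)=3\beta(n)\beta(n^2)$, which forces $\beta$ to vanish on the radical in each case — so the case-by-case inspection you defer is genuinely where the work lives, just as the paper's case check of the dimension formula is where its work lives. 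Second, your extraction of a curve in $V(\mathcal{J}_0)$ requires composing a one-parameter family with a family in $\Aut(\mathcal{J}_0)(\F(t))$ "to keep the even part fixed"; this is a lifting of a curve through the orbit map $\GL_3\to\mathcal{J}_0^{\GL_3}$ that holds in characteristic $0$ only after a possible base change $t\mapsto t^k$, and it is not justified as written. Both repairs are available, but as it stands the proposal is a correct strategy with two under-justified steps rather than a finished proof.
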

\begin{proof} 

Comparison of Table \ref{table:3_1} 
and the results given in  
\cite{irynashesta},
 it shows that
$\dim(\Aut(\mathcal{J})) = \dim(\Aut(\mathcal{J}_0))+1$, for all $\mathcal{J} \in \mathcal{JS}^{(3,1)}$.  
 We now suppose that there exists $ \mathcal{J}^\prime \in \mathcal{JS}^{(3,1)}$ such that  $ \mathcal{J}^\prime \not\simeq \mathcal{J}$ and   $\mathcal{J}^\prime  \to \mathcal{J}$. From Lemma \ref{lemma:invariants}.(iii), it follows that $\mathcal{J}^\prime_0 \to \mathcal{J}_0$. Hence, since $\mathcal{J}_0$ is rigid,  it may be concluded that $\mathcal{J}^\prime_0 \simeq \mathcal{J}_0$.  Consequently,  
$\dim( \Aut( \mathcal{J}^\prime)) = \dim(\Aut(\mathcal{J}))$, and  by Lemma \ref{lemma:invariants}.(i)  we get a contradiction. 
\end{proof}

 The principal result of this section is the following. 
 \begin{theorem} There exist exactly $21$ rigid Jordan superalgebras of type $(3,1)$, whose closures of their orbits are the irreducible components of the variety  $\mathcal{JS}^{(3,1)}$. The rigid superalgebras are:\\ $(3,1)_i$, for 
$i \in \{6, 9, 10, 11, 13, 14, 15, 16, 18, 19, 20, 21, 22,  46, 48, 50, 52, 53, 54, 55, 56 \}.
$
\label{theorem:irreducible_components_3_1}
\end{theorem}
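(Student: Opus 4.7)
The plan is to combine the finiteness-of-orbits characterization of rigidity with the information already compiled in Tables \ref{table:3_1}, \ref{table:non_degenerations_3_1} and \ref{table:3_1_degenerations_first}. Since $\mathcal{JS}^{(3,1)}$ admits only finitely many isomorphism classes, a superalgebra $\mathcal{J}$ is rigid if and only if every superalgebra deforming onto $\mathcal{J}$ is isomorphic to $\mathcal{J}$, so both the identification of the 21 rigid superalgebras and the assertion about irreducible components reduce to verifying a finite list of deformation/non-deformation claims.

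First I would establish rigidity for as many of the 21 candidates as possible via Lemma \ref{lemma:rigid_even_part}: whenever the even part $((3,1)_i)_0$ is rigid in the variety of three-dimensional Jordan algebras, as classified in \cite{irynashesta}, rigidity of $(3,1)_i$ is automatic. For the remaining candidates, whose even part is not rigid in the algebra sense, I would argue case by case that no other superalgebra on the list deforms onto them, by combining the non-deformations collected in Table \ref{table:non_degenerations_3_1} with the absence of a matching change of basis in Table \ref{table:3_1_degenerations_first}, supplemented by the invariants in Lemma \ref{lemma:invariants}, chiefly $\dim(\Aut(\mathcal{J}))$ as listed in Table \ref{table:3_1} and the pair $\dim(\mathcal{J}^r)_i$.

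The proof is completed by showing that each of the remaining 39 superalgebras is a deformation of some member of the 21-element list. The explicit one-parameter families of Table \ref{table:3_1_degenerations_first}, together with transitivity of the deformation relation, supply these deformations. The statement about irreducible components then follows: each closure $\overline{(3,1)_i^G}$ with $i$ rigid is irreducible and Zariski-closed in $\mathcal{JS}^{(3,1)}$, hence an irreducible component since the orbit is open; the 21 are pairwise incomparable (no rigid superalgebra deforms onto another); and their union exhausts $\mathcal{JS}^{(3,1)}$ because every orbit is contained in the closure of a rigid one.

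The main obstacle is sheer bookkeeping. With 60 isomorphism classes one has on the order of $60 \times 60$ ordered pairs to resolve; the coarse invariants of Lemma \ref{lemma:invariants} together with preservation of associativity dispatch the majority at once, but a handful of delicate pairs require either a subtle one-parameter family — several entries of Table \ref{table:3_1_degenerations_first} involve higher-degree polynomials in $t$, such as the cubic contractions to $(3,1)_{40}^N$ and $(3,1)_{45}^N$ — or a finer invariant than those codified in Lemma \ref{lemma:invariants}, such as $\dim((\mathcal{J}^2)_1)$, which is in fact used in Table \ref{table:non_degenerations_3_1}. Ensuring the mutual consistency of the three tables, and in particular that the list of 21 rigid superalgebras is neither incomplete nor redundant, is the principal technical difficulty of the proof.
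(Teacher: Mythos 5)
Your proposal is correct and follows essentially the same route as the paper: rigidity of most of the $21$ candidates via Lemma \ref{lemma:rigid_even_part} applied to the rigid three-dimensional even parts from \cite{irynashesta}, a separate invariant-based argument (chiefly $\dim(\Aut(\mathcal{J}))$ combined with non-deformation of the even parts) for the candidates whose even part is not rigid — in the paper these are $(3,1)_{11}$, $(3,1)_{15}$ and $(3,1)_{16}$ — and exhaustion of the remaining orbits by the explicit one-parameter families of Table \ref{table:3_1_degenerations_first} together with transitivity. The bookkeeping concern you raise is real but is exactly what the paper's tables resolve, so no new idea is missing.
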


\begin{proof}
First, we deal with the existence of the $21$ rigid superalgebras in $\mathcal{JS}^{(3,1)}$. 
 For $i \in \{6, 9, 10, 13, 14, 18, 19, 20, 21, 22,  46, 48, 50, 52, 53, 54, 55, 56 \}$, we have that 
$((3,1)_i)_0$ is a rigid Jordan algebra (see  \cite{irynashesta}).  Hence, from  Lemma  \ref{lemma:rigid_even_part}  we conclude that  the superalgebra 
$(3,1)_i$ is rigid.  
 We will now show that  $(3,1)_{11}$, $(3,1)_{15}$ and $(3,1)_{16}$ are rigid by another method. 
For simplicity, we argue  only for $(3,1)_{11}$,  since the other two cases are analogous. 
Since  $\dim (\Aut (3,1)_{11})=3$, from Lemma \ref{lemma:invariants}.(i),  it is enough to show that if $\mathcal{J}$ is a superalgebra  such that $\dim (\Aut (\mathcal{J}))\leq 2$,  then   $\mathcal{J} \not  \to  (3,1)_{11}$. 
 First notice that $((3,1)_{11})_0$ is not rigid because $((3,1)_6)_0 =((3,1)_{46})_0 \to ((3,1)_{11})_0$, although $(3,1)_6 \not \to (3,1)_{11}$ and  $(3,1)_{46} \not \to (3,1)_{11}$, because they have structure of Jordan algebras and  $(3,1)_6   \not\to (3,1)_{11}$, $(3,1)_{46}  \not\to (3,1)_{11}$ as algebras (see Remark \ref{Remark:como algebras}). 
Now, if 
$
\mathcal{J} \in \{\mathcal{J} \in \mathcal{JS}^{(3,1)} | \dim(\Aut(\mathcal{J})) \leq 2   
\} \setminus \{ (3,1)_6, (3,1)_{46}   \} 
$, 
then 
$
\mathcal{J}_0 \not \to ((3,1)_{11})_0 
$ (see \cite{tesejenny}, Table 5.2, p. 95). Hence, $\mathcal{J} \not \to (3,1)_{11}$ and  this shows that $(3,1)_{11}$ is rigid.  Finally,  by
Table
\ref{table:3_1_degenerations_first}, 
there is no other rigid superalgebra in $\mathcal{JS}^{(3,1)}$.
\end{proof}

\begin{remark}
With the previous information we were able to determine in $99.05 \%$ of cases whether a Jordan superalgebra of type $(3,1)$ belongs or not to the Zariski closure of the orbit of another superalgebra of the same type. This allowed us to describe the irreducible components of the variety $\mathcal{JS}^{(3,1)}$ as being the closure of the orbits of rigid Jordan superalgebras:

\begin{align*}
\overline{(3,1)_6^G}   =&   \{{\color{red} (3,1)_2}, {\color{red}  (3,1)_3}, (3,1)_4,  (3,1)_ 6, (3,1)_{12}, (3,1)_{17}, {\color{red}(3,1)_{28}},  {\color{red}  (3,1)_{40}^N},  (3,1)_{44}^N,\\
&(3,1)_{45}^N, (3,1)_{59}, (3,1)_{60}^N \} . \\
\overline{(3,1)_9^G} =&  \{ (3,1)_9,  (3,1)_{60}^N\}. \\
\overline{(3,1)_{10}^G } =& \{(3,1)_{10}, (3,1)_{60}^N \} .\\
\overline{(3,1)_{11}^G }   =& \{  (3,1)_{11},  {\color{red}(3,1)_{40}^N},  (3,1)_{44}^N, (3,1)_{45}^N,   (3,1)_{58},  (3,1)_{60}^N\}.  \\
\overline{(3,1)_{13}^G}   =&  \{ (3,1)_{7}, (3,1)_{13}, {\color{red}(3,1)_{40}^N},  (3,1)_{44}^N, (3,1)_{45}^N, (3,1)_{60}^N\} . \\
\overline{(3,1)_{14}^G}  =& \{ (3,1)_8, (3,1)_{14}, {\color{red}(3,1)_{40}^N},  (3,1)_{44}^N, (3,1)_{45}^N, (3,1)_{60}^N \}.  \\
\end{align*}
\begin{align*}
\overline{(3,1)_{15}^G }  =& \{   (3,1)_5,  (3,1)_{15}, {\color{red} (3,1)_{40}^N}, (3,1)_{44}^N, (3,1)_{45}^N,  (3,1)_{57}, (3,1)_{59},   (3,1)_{60}^N\} .\\
\overline{(3,1)_{16}^G }  =&  \{  (3,1)_4,  (3,1)_{16},  {\color{red}(3,1)_{40}^N}, (3,1)_{44}^N, (3,1)_{45}^N, (3,1)_{57}, (3,1)_{58},  (3,1)_{60}^N \} . \\
\overline{(3,1)_{18}^G }  =&   \{ (3,1)_7, (3,1)_{18}, {\color{red} (3,1)_{40}^N}, (3,1)_{43},  (3,1)_{44}^N,  (3,1)_{45}^N, (3,1)_{60}^N\} . \\
\overline{ (3,1)_{19}^G }  =&   \{ (3,1)_{19}, (3,1)_{23},  (3,1)_{28},  (3,1)_{29}, (3,1)_{30},   (3,1)_{34},  (3,1)_{37},  (3,1)_{40}^N,  (3,1)_{41},\\
&(3,1)_{44}^N, (3,1)_{45}^N, (3,1)_{60}^N\}.  \\
\overline{(3,1)_{20}^G} =& \{ (3,1)_2,  (3,1)_4, (3,1)_{20},  
 {\color{red} (3,1)_{23}}, 
 (3,1)_{24}, (3,1)_{26},   {\color{red} (3,1)_{28}},     {\color{red}   (3,1)_{29}}, (3,1)_{30},\\
 &(3,1)_{31},  (3,1)_{34}, (3,1)_{35},  (3,1)_{37}, (3,1)_{38}, (3,1)_{40}^N,   (3,1)_{41},   (3,1)_{42},  (3,1)_{44}^N, \\   
 &(3,1)_{45}^N, (3,1)_{60}^N  \}  .  \\
\overline{(3,1)_{21}^G }  =& \{ (3,1)_3, (3,1)_5, (3,1)_{21}, (3,1)_{25}, (3,1)_{27},  (3,1)_{30}, (3,1)_{32}, (3,1)_{34}, (3,1)_{36},\\
&(3,1)_{37}, (3,1)_{39}, (3,1)_{40}^N, (3,1)_{41},  (3,1)_{43}, (3,1)_{44}^N,    (3,1)_{45}^N, (3,1)_{60}^N \} . \\
\overline{(3,1)_{22}^G}  =&  \{ (3,1)_1,(3,1)_2, (3,1)_4,  (3,1)_{22}, (3,1)_{24}, (3,1)_{32}, (3,1)_{33},  (3,1)_{35}, (3,1)_{36},\\
&(3,1)_{37}, (3,1)_{39}, (3,1)_{40}^N, (3,1)_{44}^N,  {\color{red} (3,1)_{28}} , {\color{red}  (3,1)_{29}}, (3,1)_{41}, (3,1)_{43},  (3,1)_{45}^N,\\
&(3,1)_{60}^N \} .\\
\overline{(3,1)_{46}^G } =&   \{ {\color{red}(3,1)_{28}}, (3,1)_{29},   {\color{red} (3,1)_{40}^N},  (3,1)_{44}^N,   (3,1)_{45}^N, (3,1)_{46},    (3,1)_{49}, (3,1)_{51},  
   (3,1)_{57}, \\
   &(3,1)_{60}^N\} . \\
(\overline{3,1)_{48} ^G} =&\{ (3,1)_{48}, (3,1)_{60}^N\} .\\
\overline{(3,1)_{50}^G } =&  \{ {\color{red}(3,1)_{40}^N}, (3,1)_{44}^N, (3,1)_{45}^N,  (3,1)_{47}, (3,1)_{50},  (3,1)_{60}^N\}. \\
\overline{(3,1)_{52}^G}  =& \{  {\color{red}(3,1)_{40}^N},  (3,1)_{41},  (3,1)_{44}^N,  (3,1)_{45}^N,   (3,1)_{47},  (3,1)_{52}, (3,1)_{60}^N \} .\\
\overline{(3,1)_{53}^G}  =& \{  (3,1)_7,  {\color{red}(3,1)_{40}^N}, (3,1)_{42}, {\color{red}(3,1)_{43}},  (3,1)_{44}^N,  (3,1)_{45}^N,  (3,1)_{53},   (3,1)_{60}^N \} .\\
\overline{(3,1)_{54}^G } =&  \{ (3,1)_7,  {\color{red}(3,1)_{40}^N},  (3,1)_{43}, (3,1)_{44}^N,   (3,1)_{45}^N,  (3,1)_{ 54}, (3,1)_{60}^N   \}. \\
\overline{(3,1)_{55} ^G} =& \{  (3,1)_7,  {\color{red}(3,1)_{40}^N},  {\color{red}(3,1)_{47}}, (3,1)_{41},   (3,1)_{44}^N,  (3,1)_{45}^N,   (3,1)_{55}, (3,1)_{60}^N\} .\\
\overline{(3,1)_{56}^G  } =& \{ (3,1)_8,  {\color{red}(3,1)_{40}^N},   (3,1)_{41},   (3,1)_{44}^N, (3,1)_{45}^N,  (3,1)_{56}, (3,1)_{60}^N\} .
\end{align*}

Although we were unable to find some deformations (indicated in red above) this does not affect the number of irreducible components of the variety. Furthermore, in Table \ref{table:open_problems_3_1} we have used the symbol $\stackrel{?}{\rightarrow}$ to represent that such information is unknown.

\begin{longtable}[H]{|c | c|}
\caption{ \label{table:open_problems_3_1}Open problems in $\mathcal{JS}^{(3,1)}$}
\\
\hline
 $\mathcal{J}  \stackrel{?}{\rightarrow}  \mathcal{J}^\prime$  & Reason\\
 \hline
 \endfirsthead
 $ 6 \stackrel{?}{\rightarrow} i$, for  $ i\in \{2, 3, 28, 40  \}$;  $ \;\; 46 \stackrel{?}{\rightarrow} 28$;  $ \;\;46 \stackrel{?}{\rightarrow} 40$;  &   $\mathcal{J}_0  \stackrel{?}{\rightarrow}  \mathcal{J}_0^\prime$   \\
  $ i  \stackrel{?}{\rightarrow} 40$, for  $ i \in \{  11, \dots, 18, 49 , \dots, 56 \}$.
& \\
 \hline
 $ 20 \stackrel{?}{\rightarrow} \{23, 28, 29\}$; 
$ \;\;21 \stackrel{?}{\rightarrow} 42$;    $ \;\;22 \stackrel{?}{\rightarrow} 29$;  $ \;\; 24 \stackrel{?}{\rightarrow} 29$;     & $ \mathcal{J}  \rightarrow  \mathcal{J}^\prime$ as algebras\\
 $ 27 \stackrel{?}{\rightarrow} 42$;   $\;\; 31 \stackrel{?}{\rightarrow} 34$;  $\;\; 55 \stackrel{?}{\rightarrow} 47 $;   $ \;\;53 \stackrel{?}{\rightarrow} 43 $.  &   but $ \mathcal{J}  \stackrel{?}{\rightarrow}  \mathcal{J}^\prime$ as superalgebras   \\
 \hline
  $ 21 \stackrel{?}{\rightarrow} 38$;   $ \;\;22 \stackrel{?}{\rightarrow} 28$.  &   $\mathcal{J}  \stackrel{?}{\rightarrow}  \mathcal{J}^\prime$ as algebras \\
\hline
\end{longtable}
\end{remark}

The irreducible components are indicated in the Hasse diagram in Figure \ref{grafica_3_1} following the same pattern of colors and formats as in the previous section.

\begin{figure}[H]
  \centering
    \includegraphics[width=18.1cm, height=11.5cm, angle=90]{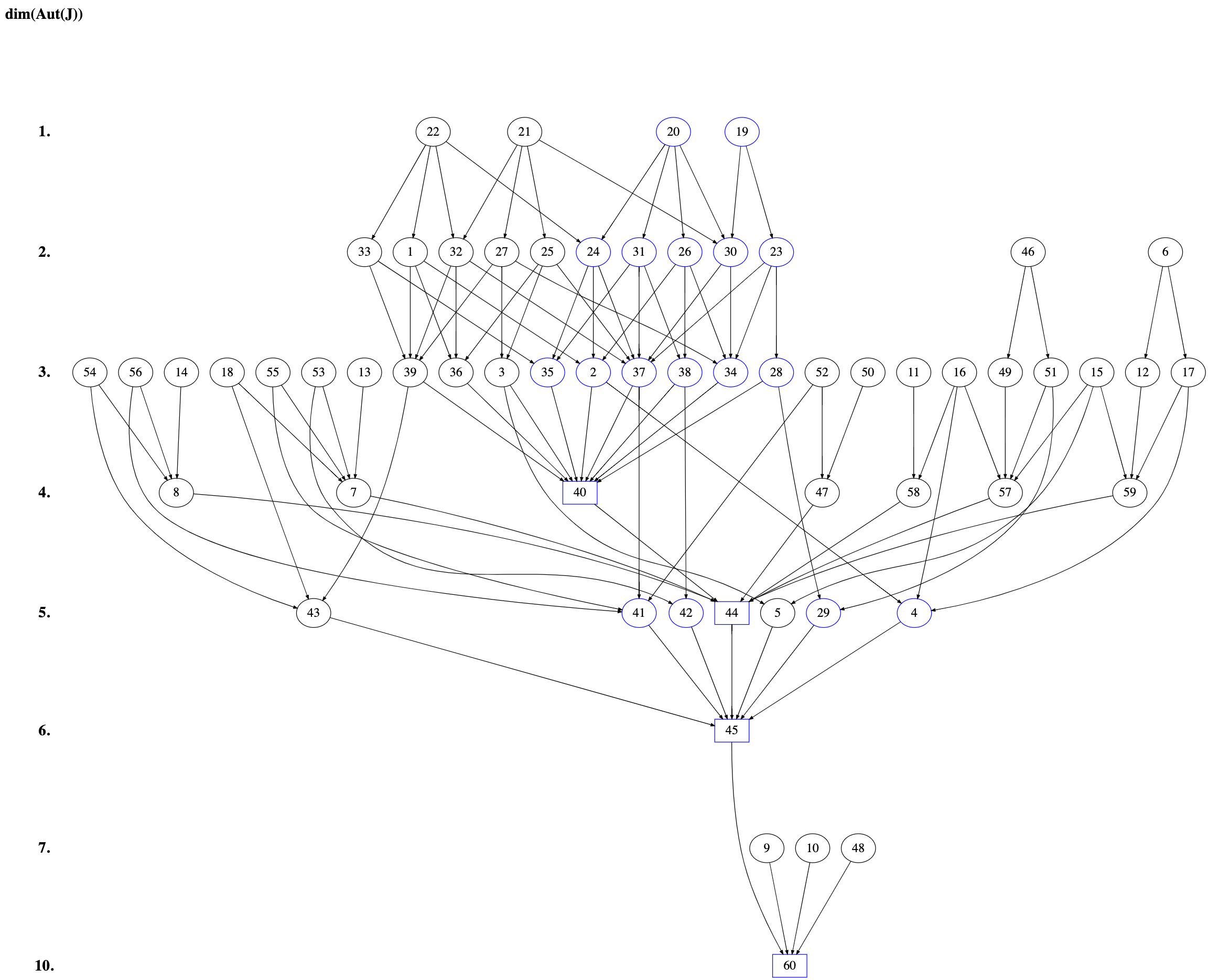}
  \caption{\label{grafica_3_1}
  Hasse diagram of deformations for Jordan superalgebras of type $(3,1)$
  }
\end{figure}

Finally, as a direct consequence of Theorem \ref{theorem:irreducible_components_3_1}, we obtain the following results:

\begin{corollary}
The subvariety $\mathcal{ASC}^{(3,1)} \subset \mathcal{JS}^{(3,1)}$ of supercommutative associative superalgebras of type $(3,1)$ has $2$ irreducible components given by 
\begin{align*}
\overline{ (3,1)_{19}^G }   = & \{   
(3,1)_{19},  (3,1)_{23}, (3,1)_{28}, (3,1)_{29},
(3,1)_{30},  (3,1)_{34},
(3,1)_{37}, 
(3,1)_{40}^N, (3,1)_{41}, \\ 
& (3,1)_{42}, 
(3,1)_{44}^N, 
(3,1)_{45}^N, (3,1)_{60}^N
\}.\\
\overline{ (3,1)_{20}^G }  = & \{  
(3,1)_{2}, (3,1)_{4},
(3,1)_{20}, (3,1)_{24}, (3,1)_{26},  
(3,1)_{30}, (3,1)_{31}, (3,1)_{34}, (3,1)_{35}, \\
& (3,1)_{37}, (3,1)_{38},  (3,1)_{40}^N, (3,1)_{41}, (3,1)_{42}, (3,1)_{44}^N,  (3,1)_{45}^N, (3,1)_{60}^N
   \} .\\
\end{align*}
\end{corollary}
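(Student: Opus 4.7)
My plan is to derive this corollary directly from Theorem~\ref{theorem:irreducible_components_3_1}, exploiting the fact that associativity is a closed condition preserved under deformation. I would begin by observing that Lemma~\ref{lemma:invariants}.(vi) implies that if $\mathcal{J}$ is associative and $\mathcal{J} \to \mathcal{J}'$, then $\mathcal{J}'$ is also associative; equivalently, $\mathcal{ASC}^{(3,1)}$ is a Zariski-closed subvariety of $\mathcal{JS}^{(3,1)}$. Consequently the closure (taken in $\mathcal{JS}^{(3,1)}$) of the orbit of any associative superalgebra is contained in $\mathcal{ASC}^{(3,1)}$ and agrees with its closure in $\mathcal{ASC}^{(3,1)}$.

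Inspecting Table~\ref{table:3_1}, I would list the associative superalgebras of type $(3,1)$, that is, those marked ``A'', obtaining $21$ isomorphism classes. Among the $21$ rigid superalgebras produced by Theorem~\ref{theorem:irreducible_components_3_1}, exactly two are associative, namely $(3,1)_{19}$ and $(3,1)_{20}$; being rigid in $\mathcal{JS}^{(3,1)}$ they are a fortiori rigid in $\mathcal{ASC}^{(3,1)}$. Moreover $\dim(\Aut((3,1)_{19})) = \dim(\Aut((3,1)_{20})) = 1$, so Lemma~\ref{lemma:invariants}.(i) precludes any non-trivial deformation between these two superalgebras in either direction; thus $\overline{(3,1)_{19}^G}$ and $\overline{(3,1)_{20}^G}$ are incomparable and define two distinct irreducible components of $\mathcal{ASC}^{(3,1)}$.

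To finish, I would show that every remaining associative superalgebra lies in at least one of these two closures, so that no further irreducible component can appear. This amounts to exhibiting, for each associative $(3,1)_i$ with $i \notin \{19,20\}$, a chain of essential deformations from $(3,1)_{19}$ or $(3,1)_{20}$ using the entries of Table~\ref{table:3_1_degenerations_first}. For example, from $(3,1)_{19}$ one reaches $(3,1)_{23}$ and $(3,1)_{30}$ directly, and transitively obtains $(3,1)_{28}$, $(3,1)_{29}$, $(3,1)_{34}$, $(3,1)_{37}$, $(3,1)_{40}^N$, $(3,1)_{41}$, $(3,1)_{44}^N$, $(3,1)_{45}^N$ and $(3,1)_{60}^N$. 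Starting from $(3,1)_{20}$ the chains cover $(3,1)_{24}$, $(3,1)_{26}$, $(3,1)_{30}$, $(3,1)_{31}$ and, by further composition, $(3,1)_{2}$, $(3,1)_{4}$, $(3,1)_{35}$, $(3,1)_{38}$, $(3,1)_{42}$, together with the nilpotent associative superalgebras already captured.

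The main obstacle I anticipate is purely combinatorial: I must ensure that the enumeration in the corollary is exhaustive and exact, so I would cross-check each listed inclusion against the non-deformation data in Table~\ref{table:non_degenerations_3_1} restricted to associative targets, in order to confirm that neither closure omits an associative superalgebra nor contains a spurious one. Because Lemma~\ref{lemma:invariants}.(vi) forbids any non-associative superalgebra from appearing in the closures, and the invariants already used in Theorem~\ref{theorem:irreducible_components_3_1} constrain the possible inclusions, this verification reduces to a careful tracing of the associative part of the Hasse diagram in Figure~\ref{grafica_3_1}.
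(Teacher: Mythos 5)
Your proposal is correct and follows essentially the same route as the paper, which presents this corollary as a direct consequence of Theorem \ref{theorem:irreducible_components_3_1}: restrict to the closed subvariety $\mathcal{ASC}^{(3,1)}$, observe that $(3,1)_{19}$ and $(3,1)_{20}$ are the only associative superalgebras among the $21$ rigid ones, and trace the associative part of the deformation tables to see that every other associative superalgebra lies in one of the two orbit closures. The cross-check you propose in your last paragraph would in fact surface a small internal inconsistency in the stated corollary: $(3,1)_{42}$ appears in $\overline{(3,1)_{19}^G}$ there, even though Table \ref{table:non_degenerations_3_1} records $19 \not\rightarrow 42$ (via the invariant $\dim((\mathcal{J}^2)_1)$) and the Remark's description of $\overline{(3,1)_{19}^G}$ omits it --- but this does not change the conclusion, since $(3,1)_{42} \in \overline{(3,1)_{20}^G}$ and the union of the two closures still exhausts all $21$ associative superalgebras.
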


\begin{corollary}
The subvariety $\mathcal{NJS}^{(3,1)} \subset \mathcal{JS}^{(3,1)}$ of nilpotent Jordan superalgebras of type $(3,1)$ is irreducible and  it is  given by 
$$
\overline{ ((3, 1)^N_{40})^G } =  \{ 
(3, 1)^N_{40}, (3, 1)^N_{44}, (3, 1)^N_{45}, (3, 1)^N_{60}
\}
$$
\label{corollary:irreducible_components_nilpotent_3_1}
\end{corollary}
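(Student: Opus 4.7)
The plan is to leverage the finite classification of nilpotent Jordan superalgebras of type $(3,1)$ together with the explicit deformations already recorded in Table \ref{table:3_1_degenerations_first}.

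First, I would identify, by inspection of Table \ref{table:3_1}, the nilpotent members of $\mathcal{JS}^{(3,1)}$: they are precisely those marked with superscript $N$, namely $(3,1)^N_{40}$, $(3,1)^N_{44}$, $(3,1)^N_{45}$, and the trivial superalgebra $(3,1)^N_{60}$. Since $\mathcal{NJS}^{(3,1)}$ is the affine subvariety of $\mathcal{JS}^{(3,1)}$ cut out by the polynomial conditions imposing nilpotency on the structure constants, it consists exactly of the union of the four $G$-orbits corresponding to these representatives.

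Next, I would collect the relevant deformations. From Table \ref{table:3_1_degenerations_first} one reads off $(3,1)^N_{40}\rightarrow (3,1)^N_{44}$ and $(3,1)^N_{44}\rightarrow (3,1)^N_{45}$, so by transitivity $(3,1)^N_{40}\rightarrow (3,1)^N_{45}$. Moreover, every superalgebra deforms to the trivial one via $g(t)=t^{-1}\id$, hence $(3,1)^N_{40}\rightarrow (3,1)^N_{60}$. Therefore the orbits of the three remaining nilpotent superalgebras all lie in $\overline{((3,1)^N_{40})^G}$.

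To conclude, since these four orbits exhaust $\mathcal{NJS}^{(3,1)}$ and each of them is contained in $\overline{((3,1)^N_{40})^G}$, I obtain $\mathcal{NJS}^{(3,1)}=\overline{((3,1)^N_{40})^G}$. Being the Zariski closure of a single $G$-orbit---the image of the connected group $G$ under a morphism of algebraic varieties---this set is irreducible, yielding the desired unique component. The only delicate point is ensuring that Table \ref{table:3_1} lists all nilpotent superalgebras of type $(3,1)$, but this is precisely the content of the classification from \cite{SuperPowerAssoc4}; so no real obstacle arises, and the result is essentially a corollary of the deformation diagram already constructed.
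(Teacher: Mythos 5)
Your argument is correct and follows essentially the same route the paper takes (implicitly, as the paper states this as a direct consequence of its classification and deformation tables): identify the four nilpotent isomorphism classes, chain the recorded deformations $(3,1)^N_{40}\rightarrow(3,1)^N_{44}\rightarrow(3,1)^N_{45}$ together with the deformation to the trivial superalgebra, and use that $\mathcal{NJS}^{(3,1)}$ is closed and an orbit closure is irreducible. No gaps.
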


\bibliographystyle{amsplain}
\bibliography{library}

\providecommand{\bysame}{\leavevmode\hbox to3em{\hrulefill}\thinspace}
\providecommand{\MR}{\relax\ifhmode\unskip\space\fi MR }
\providecommand{\MRhref}[2]{%
  \href{http://www.ams.org/mathscinet-getitem?mr=#1}{#2}
}
\providecommand{\href}[2]{#2}
\begin{thebibliography}{10}

\bibitem{DeformPoissonAlgebras}
Hani Abdelwahab, Elisabete Barreiro, Antonio~J. Calderón, and Amir {Fernández Ouaridi}, \emph{The algebraic classification and degenerations of nilpotent {P}oisson algebras}, Journal of Algebra \textbf{615} (2023), 243--277.

\bibitem{DegLieSuperalg}
Mar\'ia~Alejandra Alvarez and Isabel Hern\'andez, \emph{On degenerations of lie superalgebras}, Linear and Multilinear Algebra \textbf{68} (2020), no.~1, 29--44.

\bibitem{Geom_SuperJAdim3}
Mar{\'\i}a~Alejandra Alvarez, Isabel Hern{\'a}ndez, and Ivan Kaygorodov, \emph{Degenerations of {J}ordan superalgebras}, Bulletin of the Malaysian Mathematical Sciences Society \textbf{42} (2019), 3289 -- 3301.

\bibitem{ancocheabermudes}
Jos\'{e}~Mar\'{\i}a Ancochea~Berm\'{u}dez, Javier Fres\'{a}n, and J~Margalef Bentabol, \emph{Contractions of low-dimensional nilpotent {Jordan} algebras}, Communications in Algebra \textbf{39} (2011), no.~3, 1139--1151.

\bibitem{Armour2014}
Aaron Armour and Yinhuo Zhang, \emph{Geometric classification of 4-dimensional superalgebras}, pp.~291--323, Springer Berlin Heidelberg, Berlin, Heidelberg, 2014.

\bibitem{preLie2009}
Thomas Bene\v{s} and Dietrich Burde, \emph{{Degenerations of pre-Lie algebras}}, Journal of Mathematical Physics \textbf{50} (2009), no.~11, 112102.

\bibitem{Novikov2014}
\bysame, \emph{Classification of orbit closures in the variety of three-dimensional {N}ovikov algebras}, Journal of Algebra and Its Applications \textbf{13} (2014), no.~02, 1350081.

\bibitem{Burde1999}
Dietrich Burde and Christine Steinhoff, \emph{Classification of orbit closures of 4-dimensional complex {Lie} algebras}, Journal of Algebra \textbf{4} (1999), no.~C, 1--11.

\bibitem{Casas2013}
J.~M. Casas, A.~K. Khudoyberdiyev, M.~Ladra, and B.~A. Omirov, \emph{On the degenerations of solvable leibniz algebras}, Linear Algebra and Its Applications \textbf{439} (2013), 472--487.

\bibitem{Filippov2011}
Jos\'e~A. de~Azc\'arraga, Jos\'e~M. Izquierdo, and Mois\'es Pic\'on, \emph{Contractions of {F}ilippov algebras}, Journal of Mathematical Physics \textbf{52} (2011), no.~1.

\bibitem{finiterepresentatiotypeisopen}
Peter Gabriel, \emph{Finite representation type is open}, Lectures Notes in Mathematics \textbf{488} (1975), 132--155.

\bibitem{gerstenhaber}
Murray Gerstenhaber, \emph{On the deformation of rings and algebras}, The Annals of Mathematics \textbf{79} (1964), no.~1, 59--103.

\bibitem{SuperPowerAssoc4}
Isabel Hern{\'a}ndez, Rodrigo~Lucas Rodrigues, and Elkin~Oveimar Quintero~Vanegas, \emph{Low-dimensional commutative power-associative superalgebras}, International Journal of Algebra and Computation \textbf{31} (2021), 1613 -- 1632.

\bibitem{irynashesta}
I~Kashuba and I~Shestakov, \emph{{Jordan} algebras of dimension three: Geometric classification and representation type}, Actas del XVI Coloquio Latinoamericano de \'{A}lgebra (Colonia del Sacramento, Uruguay). Biblioteca de la Revista Matem\'{a}tica Iberoamericana \textbf{1} (2005), 295--315.

\bibitem{kashubamartin}
Iryna Kashuba and Mar\'{\i}a~Eugenia Martin, \emph{Deformations of {Jordan} algebras of dimension four}, Journal of Algebra \textbf{399} (2014), 277--289.

\bibitem{MartinJor3}
\bysame, \emph{The variety of three-dimensional real {Jordan} algebras}, Journal of Algebra and its Applications \textbf{15} (2016), no.~8, 1650158 (17 pages).

\bibitem{MartinJorN5}
\bysame, \emph{Geometric classification of nilpotent {J}ordan algebras of dimension five}, Journal of Pure and Applied Algebra \textbf{222} (2018), no.~3, 546 -- 559.

\bibitem{2003Kashuba_Graded}
Iryna Kashuba and Jir\'i Patera, \emph{Graded contractions of jordan algebras and of their representations}, Journal of Physics A: Mathematical and General \textbf{36} (2003), no.~50, 12453.

\bibitem{kirillov}
A~A Kirillov and Yu~A. Neretin, \emph{The variety $a_n$ of $n$-dimensional {Lie} algebra structures}, American Mathematical Society \textbf{137} (1987), no.~2, 21--30.

\bibitem{tesejenny}
Mar\'{\i}a~Eugenia Martin, \emph{Deforma\c{c}\~{o}es e isotopias de \'{a}lgebras de {Jordan}}, Tese de doutorado, Instituto de Matem\'{a}tica e Estat\'{\i}stica, Universidade de S\~{a}o Paulo, IME-USP, S\~{a}o Paulo, SP, 2013, p.~242.

\bibitem{mazzola}
Guerino Mazzola, \emph{The algebraic and geometric classification of associative algebras of dimension five}, Manuscripta Mathematica \textbf{27} (1979), 81--101.

\bibitem{GeoPowerAssoc}
Rodrigo~Lucas Rodrigues, Angelo~Papa Neto, and Elkin~Quintero Vanegas, \emph{Commutative power-associative algebras of small dimension}, Communications in Algebra \textbf{48} (2020), no.~12, 5056--5066.

\bibitem{VergneConj}
M.~Vergne, \emph{Cohomologie des alg\`ebres de {Lie} nilpotentes. application a l'etude de la variete des algebres de {Lie} nilpotentes.}, Bull. Soc. Math. France \textbf{98} (1970), no.~28, 81--116.

\end{thebibliography}
\end{document}